\documentclass[12pt]{amsart}
\usepackage{amsthm}
\usepackage{amssymb}
\usepackage{cite}
\usepackage{cases}
\usepackage{longtable}
\usepackage{geometry}
\geometry{verbose,tmargin=0.8in,bmargin=0.8in,lmargin=1in,rmargin=1in}
\usepackage{enumerate}
\usepackage{setspace}
\usepackage{tikz-cd}
\usetikzlibrary{matrix, arrows}
\usepackage[unicode=true]
 {hyperref}
\hypersetup{
colorlinks=true,
urlcolor=black,
citecolor=blue,
linkcolor=blue,
}

\onehalfspacing
\allowdisplaybreaks
\setcounter{tocdepth}{2}

\newtheorem{thm}{Theorem}[section]
\newtheorem{prop}[thm]{Proposition}
\newtheorem{lem}[thm]{Lemma}
\newtheorem{cor}[thm]{Corollary}

\theoremstyle{definition}
\newtheorem{definition}[thm]{Definition}

\newtheorem{example}[thm]{Example}

\theoremstyle{remark}
\newtheorem{remark}[thm]{Remark}

\numberwithin{equation}{section}

\newcommand{\Ann}{\mathrm{Ann}}
\newcommand{\Aut}{\mathrm{Aut}}

\newcommand{\op}{\mbox{\tiny op}}

\begin{document}

\large 

\title{On Gr\"{u}n's lemma for perfect skew braces}
\author{Cindy (Sin Yi) Tsang}
\address{Department of Mathematics\\
Ochanomizu University\\
2-1-1 Otsuka, Bunkyo-ku\\
Tokyo\\
Japan}
\email{tsang.sin.yi@ocha.ac.jp}\urladdr{http://sites.google.com/site/cindysinyitsang/} 
\date{\today}

\maketitle

\begin{abstract}By previous work of Ced\'{o}, Smoktunowicz, and Vendramin, one already knows that the analog of Gr\"{u}n's lemma fails to hold for perfect skew left braces when the socle is used as an analog of the center of a group. In this paper, we use the annihilator instead of the socle. We shall show that the analog of Gr\"{u}n's lemma holds for perfect two-sided skew braces but not in general.
\end{abstract}

\tableofcontents
 
\section{Introduction}

A \emph{skew brace} is any set $A=(A,\cdot,\circ)$ equipped with two group operations $\cdot$ and $\circ$ such that left brace relation
\[ a\circ (b\cdot c) = (a\circ b)\cdot a^{-1}\cdot (a\circ c)\]
holds for all $a,b,c\in A$, where $a^{-1}$ denotes the inverse of $a$ with respect to $\cdot$. It is easy to see that $(A,\cdot)$ and $(A,\circ)$ must share the same identity element, which we denote by $1$. Due to their relations with the set-theoretic solutions to the Yang--Baxter equation, understanding the structure of skew braces is a problem of interest; we refer the reader to \cite{Rump,GV,SB,ann} for more details.

Given a group $(A,\cdot)$, we can construct a skew brace $(A,\cdot,\circ)$ by defining $\circ$ to be the same operation $\cdot$, or its opposite operation $\cdot^{\op}$, that is $a\cdot^{\op}b = b\cdot a$. The skew braces of the forms $(A,\cdot,\cdot)$ and $(A,\cdot,\cdot^{\op})$ are said to be \emph{trivial} and \emph{almost trivial}, respectively, because they are essentially just groups. 

Skew braces may therefore be regarded as an extension of groups. Indeed, there are many similarities between skew braces and groups (see \cite{Tsa,isoclinism,Schur,Lazard} for some examples). The purpose of this paper is to continue research in this direction and explore analogs of Gr\"{u}n's lemma  \cite[Satz 4]{Grun} for skew braces.

A group $G$ is said to be \textit{perfect} if it equals its derived subgroup $[G,G]$. For any group $G$, let us denote its center by $Z(G)$.

\begin{thm}[Gr\"{u}n's lemma]\label{thm:Grun}For any perfect group $G$, we have
\[Z(G/Z(G))=1.\]
\end{thm}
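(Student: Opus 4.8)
The plan is to show that every element of $G$ which becomes central in the quotient $G/Z(G)$ is already central in $G$ itself. So I would write $Z=Z(G)$, take $x\in G$ with $xZ\in Z(G/Z)$, and note that by the definition of the center this is equivalent to $[x,g]:=x^{-1}g^{-1}xg\in Z$ for all $g\in G$. The goal is then to deduce that $[x,g]=1$ for every $g\in G$, that is, $x\in Z$.

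The key step is to observe that, granted this hypothesis, the map $\varphi\colon G\to Z$ defined by $\varphi(g)=[x,g]$ is a group homomorphism. Here I would use the commutator identity $[x,gh]=[x,h]\cdot[x,g]^{h}$: because $[x,g]\in Z=Z(G)$, conjugation by $h$ fixes $[x,g]$, so $[x,gh]=[x,h][x,g]=[x,g][x,h]$, the last step using that $Z$ is abelian. Hence $\varphi(gh)=\varphi(g)\varphi(h)$.

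Finally I would invoke perfectness. A homomorphism from $G$ into the abelian group $Z$ factors through the abelianization $G/[G,G]$, which is trivial since $G=[G,G]$; therefore $\varphi$ is the trivial map, so $[x,g]=1$ for all $g\in G$ and $x\in Z(G)$. This gives $Z(G/Z(G))=1$.

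I do not anticipate a real obstacle: the argument is short and rests entirely on the single remark that a commutator map with values in the center is a homomorphism. The only thing to watch is the commutator expansion and keeping the bracket convention consistent throughout; everything else is formal. I would also expect the skew-brace analogue treated later in the paper to follow the same template — replacing $Z(G)$ by the annihilator and $[G,G]$ by the relevant derived-type substructure, and checking that the appropriate map lands in an abelian object and is additive there — with the genuine difficulty being that these replacements behave well only under extra hypotheses such as the two-sided condition.
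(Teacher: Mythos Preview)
Your argument is correct and is the classical direct proof of Gr\"{u}n's lemma. The paper itself does not give a self-contained proof of this theorem: it is stated as a known result with a reference to Gr\"{u}n, and later the paper remarks that it may be \emph{recovered} from Corollary~\ref{cor'} by taking $A=(G,\cdot,\cdot^{\op})$ to be an almost trivial skew brace (so that $\Ann(A)=Z(G)$, $A*A=[G,G]$, and two-sidedness is automatic). Thus the paper's ``proof'' is the detour through the skew-brace generalization, whereas you argue directly in the group.

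That said, your anticipated template is exactly what the paper does in Propositions~\ref{Prop1} and~\ref{Prop2}: for $a\in\Ann_2(A)$ one defines $\pi_a(x)=[a,x]$ (respectively $\varphi_a(x)=a*x$), observes that its image lies in $\Ann(A)\subseteq Z(A,\cdot)$, uses the identity $[a,x\cdot y]=[a,x]\cdot x[a,y]x^{-1}$ (respectively its $*$-analog (\ref{iden1})) to conclude it is a homomorphism on $(A,\cdot)$, and then checks that each generator $x*y$ of $A*A$ lies in the kernel. So your direct group-theoretic argument and the paper's skew-brace argument are parallel; the only genuine novelty in the brace setting, as you correctly guessed, is that the third condition $(A*A)*\Ann_2(A)=1$ requires the map $\psi_a(x)=x*a$, which is naturally a homomorphism on $(A,\circ)$ rather than $(A,\cdot)$, and bridging this gap is precisely where the two-sided hypothesis enters (Propositions~\ref{char1} and~\ref{char2}).
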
 

To consider the analog of Gr\"{u}n's lemma in the context of skew braces, we need to first define ``quotient", ``perfect", and ``center" for skew braces.

In what follows, let $A = (A,\cdot,\circ)$ be a skew brace. 

\begin{definition}\label{def:quotient}A subset $I$ of $A$ is said to be an \textit{ideal} if 
\begin{enumerate}[(1)]
\item $I$ is a normal subgroup of $(A,\cdot)$;
\item $I$ is a normal subgroup of $(A,\circ)$;
\item $a\cdot I=a\circ I$ for all $a\in A$.
\end{enumerate}
In this case, we can naturally endow the coset space
\[ A/I = \{a\cdot I : a\in A\} = \{a\circ I: a\in A\}\]
with a quotient skew brace structure from that of $A$.  
\end{definition}

To measure the difference between the group operations $\cdot$ and $\circ$, define  
\[ a * b = a^{-1} \cdot (a\circ b) \cdot b^{-1}.\]
For example, we have $a*b=1$ when $A$ is trivial, and $a*b = a^{-1}\cdot b \cdot a\cdot b^{-1}$ when $A$ is almost trivial. We then see that $*$ may be viewed as an analog of the commutator $[\, ,\, ]$, and the notion of ``trivial" for skew braces is a natural analog of ``abelian" for groups. For any subsets $X,Y$ of $A$, we define $X*Y$ to be the subgroup of $(A,\cdot)$ generated by the elements $x*y$ for $x\in X,\, y\in Y$. 

\begin{definition}\label{def:perfect} The \textit{derived ideal} of $A$ is defined to be the subset $A*A$. It is known (see \cite[Proposition 2.1]{perfect}) that $A*A$ is indeed an ideal of $A$, in fact the smallest ideal of $A$ for which the quotient skew brace is trivial. We shall say that $A$ is \textit{perfect} if it equals its derived ideal $A*A$.
\end{definition}

As for the analog of ``center", there are two natural candidates.

\begin{definition}\label{def:center} The \textit{socle} of $A$ is defined as
\[ \mathrm{Soc}(A) = \{a\in A \mid \forall x\in A: a*x=1\}\cap Z(A,\cdot),\]
which is an ideal of $A$ by \cite[Lemma 2.5]{GV}. The \textit{annihilator} of $A$ is defined as
\begin{align*}
\Ann(A) &= \{a\in A \mid \forall x\in A: a*x=1\}\cap Z(A,\cdot)\cap Z(A,\circ)\\
&= \{a\in A\mid \forall x\in A: a*x=1=x*a\}\cap Z(A,\cdot), \end{align*}
which is easily checked to be an ideal of $A$.
\end{definition}

As analogs of Gr\"{u}n's lemma, it is natural to ask whether
\[\mathrm{Soc}(A/\mathrm{Soc}(A))=1\quad\mbox{and}\quad \Ann(A/\Ann (A))=1\]
hold for all perfect skew braces $A$. It is already known by \cite[Section 3]{perfect} that there exist perfect skew braces $A$ for which the former equality fails. In this paper, we wish to consider the latter equality instead. Note that
\[ \Ann(A/\Ann(A)) = \Ann_2(A)/\Ann(A)\]
for a unique ideal $\Ann_2(A)$ of $A$ by the isomorphism theorems in groups. We shall refer to $\Ann_2(A)$ as the \textit{second annihilator} of $A$. By Definition \ref{def:center}, we are then reduced to investigating whether
\begin{align}\label{good1}
\Ann_2(A)*(A*A)&=1,\\\label{good2}
[\Ann_2(A),A*A]&=1,\\\label{bad}
(A*A)*\Ann_2(A)&=1,
\end{align}
where $[\,,\, ]$ denotes the commutator in the group $(A,\cdot)$. In the case that $A$ is perfect, we have $A=A*A$, so then (\ref{good1}), (\ref{good2}), and (\ref{bad}) would imply
\[\Ann_2(A)=\Ann(A)\quad \mbox{that is} \quad \Ann(A/\Ann(A))=1.\]
However, as we shall show, while the equalities (\ref{good1}) and (\ref{good2}) always hold, the equality (\ref{bad}) fails in some cases.

\begin{definition} We shall say that $A$ is \textit{two-sided} if the right brace relation
\[ (b\cdot c)\circ a = (b\circ a)\cdot a^{-1}\cdot (c\circ a)\]
also holds for all $a,b,c\in A$.
\end{definition}

Our main results are as follows:

\begin{thm}\label{thm1}For any skew brace $A$, we have
\[ \Ann_2(A)*(A*A)=1\quad\mbox{and}\quad[\Ann_2(A),A*A]=1.\]
\end{thm}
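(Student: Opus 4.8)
The plan is to reduce both equalities to a statement about the generators $x*y$ of the derived ideal and then to verify that statement by a direct computation with the maps $\lambda_a\colon A\to A$ given by $\lambda_a(b)=a^{-1}\cdot(a\circ b)$. I would use the standard facts that each $\lambda_a$ is an automorphism of $(A,\cdot)$, that $\lambda_{a\circ b}=\lambda_a\lambda_b$, and that $a*b=\lambda_a(b)\cdot b^{-1}$.

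Fix $b\in\Ann_2(A)$ and write $\overline{(-)}$ for images in $\overline{A}=A/\Ann(A)$. Since $\Ann_2(A)/\Ann(A)=\Ann(\overline{A})$, for every $x\in A$ we have $b*x,\ x*b,\ [b,x]\in\Ann(A)$, and $\overline{b}$ is central in $(\overline{A},\cdot)$ and in $(\overline{A},\circ)$. From the left brace relation, $b*(xy)=(b*x)\cdot x(b*y)x^{-1}$, and since $b*y\in Z(A,\cdot)$ this shows $x\mapsto b*x$ is a group homomorphism $(A,\cdot)\to\Ann(A)$; likewise, as $[b,x]\in Z(A,\cdot)$ for all $x$, the map $x\mapsto[b,x]$ is a homomorphism $(A,\cdot)\to\Ann(A)$. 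As $A*A$ is, by definition, generated as a subgroup of $(A,\cdot)$ by the elements $x*y$, it therefore suffices to prove
\[ b*(x*y)=1 \quad\mbox{and}\quad [b,x*y]=1 \qquad\mbox{for all }x,y\in A. \]

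For this I would record three elementary facts: (a) $\lambda_n=\mathrm{id}_A$ whenever $n\in\Ann(A)$, because then $n*z=1$ for all $z$; (b) every $\lambda_a$, hence every $\lambda_a^{-1}$, fixes $\Ann(A)$ pointwise, because $a*n=1$ for $n\in\Ann(A)$; and (c) $\lambda_b$ commutes with every $\lambda_x$. Fact (c) is the one real input: since $\overline{b}$ is central in $(\overline{A},\circ)$, the elements $b\circ x$ and $x\circ b$ differ by an element $n\in\Ann(A)$, so $\lambda_b\lambda_x=\lambda_{b\circ x}=\lambda_{x\circ b}\lambda_n=\lambda_x\lambda_b$ by (a). I would also use $\lambda_b(y)=(b*y)\,y$ and $\lambda_x(b)=(x*b)\,b$, where $b*y,\,x*b\in\Ann(A)\subseteq Z(A,\cdot)$. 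Now expanding $x*y=\lambda_x(y)\,y^{-1}$ and applying the automorphism $\lambda_b$, commuting it past $\lambda_x$ by (c), and pulling the central factor $b*y$ (fixed by $\lambda_x$, by (b)) outside, one gets $\lambda_b(x*y)=(b*y)(x*y)(b*y)^{-1}=x*y$, i.e.\ $b*(x*y)=1$; hence $\lambda_b$ fixes $A*A$ pointwise, so $b*c=\lambda_b(c)c^{-1}=1$ for every $c\in A*A$, giving $\Ann_2(A)*(A*A)=1$. For the commutator, the homomorphism property gives $[b,x*y]=[b,\lambda_x(y)]\cdot[b,y]^{-1}$; setting $b'=\lambda_x^{-1}(b)$, which by $\lambda_x(b)=(x*b)b$ and (b) equals $(x*b)^{-1}b$, hence $b$ times a central element, we get $[b,\lambda_x(y)]=[\lambda_x(b'),\lambda_x(y)]=\lambda_x([b',y])=\lambda_x([b,y])=[b,y]$, the last step by (b) since $[b,y]\in\Ann(A)$. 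Thus $[b,x*y]=1$, and $[\Ann_2(A),A*A]=1$.

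The main obstacle is the passage from ``modulo $\Ann(A)$'' information to exact identities: membership in $\Ann_2(A)$ only pins down $b$ up to $\Ann(A)$, and the argument closes precisely because the elements of $\Ann(A)$ are central for \emph{both} group operations and act as the identity through all the maps $\lambda_a$. This is exactly the feature that the socle lacks --- $\mathrm{Soc}(A)$ need not be contained in $Z(A,\circ)$ --- which is why the analogous statement with $\mathrm{Soc}$ in place of $\Ann$ fails.
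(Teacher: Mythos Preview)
Your proof is correct and follows the same strategy as the paper (Propositions~\ref{Prop1} and~\ref{Prop2}): reduce to the generators $x*y$ using that $x\mapsto b*x$ and $x\mapsto[b,x]$ are homomorphisms on $(A,\cdot)$ with image in $\Ann(A)$, then compute. Your computation for $[\Ann_2(A),A*A]=1$ is essentially identical to the paper's, and for $\Ann_2(A)*(A*A)=1$ you use the commutativity $\lambda_b\lambda_x=\lambda_x\lambda_b$ (deduced from $[b,x]_\circ\in\Ann(A)$ and your fact~(a)) directly, whereas the paper packages the same observation through the identities (\ref{iden2}) and (\ref{iden3}).
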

\begin{proof}See Propositions \ref{Prop1} and \ref{Prop2}.
\end{proof}

\begin{thm}\label{thm2}For any two-sided skew brace $A$, we have
\[ (A*A)*\Ann_2(A)=1.\]
\end{thm}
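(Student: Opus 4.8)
The plan is to reduce the equality $(A*A)*\Ann_2(A)=1$ to a check on the generators of $A*A$, and then to exploit the extra symmetry that two-sidedness provides. For $x,b\in A$ set $\rho_b(x)=(x\circ b)\cdot b^{-1}$, so that $x*b=x^{-1}\cdot\rho_b(x)$ and hence $x*b=1$ if and only if $\rho_b(x)=x$. The right brace relation says precisely that each $\rho_b$ is an endomorphism of $(A,\cdot)$, and a short computation with it gives $\rho_{b\circ c}=\rho_c\circ\rho_b$; taking $c$ to be the inverse of $b$ in $(A,\circ)$ then shows that each $\rho_b$ is bijective, hence lies in $\Aut(A,\cdot)$. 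Therefore, for fixed $b$, the fixed-point set $F_b=\{x\in A:x*b=1\}$ is a subgroup of $(A,\cdot)$. Since $A*A$ is by definition the subgroup of $(A,\cdot)$ generated by the elements $v*v'$ with $v,v'\in A$, and $(A*A)*\Ann_2(A)$ is generated by the elements $y*x$ with $y\in A*A$ and $x\in\Ann_2(A)$, it will suffice to prove that $v*v'\in F_b$ for all $v,v'\in A$ and all $b\in\Ann_2(A)$.

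First I would unwind the definition $\Ann_2(A)/\Ann(A)=\Ann(A/\Ann(A))$: since the quotient map $A\to A/\Ann(A)$ is a skew brace homomorphism, $b\in\Ann_2(A)$ forces $x*b\in\Ann(A)$ and $b*x\in\Ann(A)$ for every $x\in A$; in particular each such element is central in $(A,\cdot)$, and for $z\in\Ann(A)$ one has $y\circ z=y\cdot z$, hence $y*z=1$, for all $y$. The computational heart of the argument is the identity
\[
\rho_b(a*c)=(a*b)^{-1}\cdot\bigl(a*(c\circ b)\bigr),
\]
valid in any two-sided skew brace, which I would obtain by writing $a*c=a^{-1}\cdot\rho_c(a)$, applying the homomorphism $\rho_b$, and substituting $\rho_b\circ\rho_c=\rho_{c\circ b}$. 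Now fix $v,v'\in A$ and $b\in\Ann_2(A)$, and set $z=v'*b$ and $z'=v*b$, both in $\Ann(A)$ and hence central in $(A,\cdot)$. From $v'\circ b=v'\cdot(v'*b)\cdot b=(v'\cdot b)\cdot z$, two applications of the left brace relation in the form $a*(xy)=(a*x)\cdot x(a*y)x^{-1}$, together with $v*z=1$ and the centrality of $z'$, collapse $v*(v'\circ b)$ to $(v*v')\cdot z'$. Substituting into the displayed identity gives $\rho_b(v*v')=z'^{-1}\cdot(v*v')\cdot z'=v*v'$, again by centrality of $z'$. Thus $v*v'\in F_b$, and the theorem follows.

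The part I expect to require the most care is not any single calculation but the placement of the two-sidedness hypothesis: it is used once to know that $F_b$ is a subgroup (so that checking the generators $v*v'$ is enough), and once to obtain the displayed $\rho_b$-identity via $\rho_{b\circ c}=\rho_c\circ\rho_b$. One must also be attentive to the fact that $v*b$ and $v'*b$ really lie in $\Ann(A)$ and not merely in $\mathrm{Soc}(A)$: both their centrality in $(A,\cdot)$ and their $*$-triviality against arbitrary elements enter the collapse of $v*(v'\circ b)$, which is consistent with the fact, to be shown elsewhere in the paper, that the analog of Gr\"un's lemma can fail for the socle. The remaining ingredients — the identities $a*(bc)=(a*b)\cdot b(a*c)b^{-1}$ and $(a\cdot a')*b=a'^{-1}(a*b)a'\cdot(a'*b)$, the latter again a consequence of the right brace relation and the source of the subgroup property of $F_b$ — are routine, and I would either verify them directly or quote them.
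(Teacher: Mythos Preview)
Your argument is correct, and it reaches the conclusion by a genuinely different route from the paper. The paper does not work with the right map $\rho_b$ at all; instead it introduces $\psi_a(x)=x*a$, shows for $a\in\Ann_2(A)$ that $\psi_a$ is always a $(A,\circ)$-homomorphism, and proves the equivalence $A*A\subseteq\ker(\psi_a)\iff\psi_a$ is also a $(A,\cdot)$-homomorphism (the forward direction being a two-line trick using that $\psi_a$ then respects both operations on $x*y=x^{-1}\cdot(x\circ y)\cdot y^{-1}$). A second equivalence links the $(A,\cdot)$-homomorphism property of $\psi_{\overline{a}}$ to the inner $(A,\circ)$-automorphism $\iota_a$ lying in $\Aut(A,\cdot)$, whence two-sidedness (which forces all $\iota_a$ into $\Aut(A,\cdot)$) finishes the job. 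What the paper gains is an \emph{if and only if} criterion, $(A*A)*\Ann_2(A)=1\iff\iota_a\in\Aut(A,\cdot)$ for every $a\in\Ann_2(A)$, valid in arbitrary skew braces and directly usable for the counterexample constructions later on. Your approach, by contrast, stays entirely on the two-sided side: the fixed-point subgroup $F_b$ and the identity $\rho_b(a*c)=(a*b)^{-1}\cdot(a*(c\circ b))$ both use $\rho_b\in\Aut(A,\cdot)$, and the resulting proof is shorter and more self-contained for the theorem as stated, at the cost of not isolating the general characterization.
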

\begin{proof} See Corollary \ref{cor}.
\end{proof}

\begin{cor}\label{cor'}For any two-sided perfect skew brace $A$, we have
\[ \Ann(A/\Ann(A))=1.\]
\end{cor}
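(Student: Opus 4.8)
The plan is to simply combine Theorems \ref{thm1} and \ref{thm2} with the definition of the annihilator, so that this corollary becomes pure bookkeeping. First I would recall that, by construction, $\Ann_2(A)$ is the unique ideal of $A$ containing $\Ann(A)$ for which $\Ann_2(A)/\Ann(A)=\Ann(A/\Ann(A))$; in particular $\Ann(A)\subseteq\Ann_2(A)$, so it suffices to establish the reverse inclusion $\Ann_2(A)\subseteq\Ann(A)$.

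Next I would unwind the definition of $\Ann(A)$ from Definition \ref{def:center}: an element $a$ lies in $\Ann(A)$ exactly when $a*x=1=x*a$ for every $x\in A$ and $a\in Z(A,\cdot)$. Hence the inclusion $\Ann_2(A)\subseteq\Ann(A)$ is equivalent to the three assertions $\Ann_2(A)*A=1$, $A*\Ann_2(A)=1$, and $[\Ann_2(A),A]=1$. Since $A$ is perfect we have $A=A*A$, so these three assertions are precisely \eqref{good1}, \eqref{bad}, and \eqref{good2} with $A*A$ rewritten as $A$. Now Theorem \ref{thm1} supplies \eqref{good1} and \eqref{good2} for an arbitrary skew brace, while Theorem \ref{thm2} supplies \eqref{bad} under the two-sided hypothesis, which is available here by assumption.

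Combining these gives $\Ann_2(A)=\Ann(A)$, that is, $\Ann(A/\Ann(A))=\Ann_2(A)/\Ann(A)=1$, as claimed. I do not expect any genuine obstacle: the real content has already been isolated in Theorems \ref{thm1} and \ref{thm2}, and the corollary is just the step that translates the vanishing of the relevant $*$-products and commutators into the collapse of the second annihilator onto the first. The only minor points to watch are to invoke perfectness at the right moment — to replace $A*A$ by $A$ in all three conditions — and to note that $\Ann_2(A)$ is well defined as an ideal, which follows from the isomorphism theorems in groups as indicated in the excerpt.
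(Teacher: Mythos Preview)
Your proof is correct and follows exactly the approach of the paper, which simply says the corollary ``follows immediately from Theorems~\ref{thm1} and~\ref{thm2}''; you have merely spelled out the bookkeeping that the paper leaves implicit.
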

\begin{proof} This follows immediately from Theorems \ref{thm1} and \ref{thm2}.\end{proof}

Note that Theorem \ref{thm:Grun} may be recovered from Corollary \ref{cor'} by taking $A$ to be an almost trivial skew brace. Thus, Corollary \ref{cor'} is a genuine generalization of Gr\"{u}n's lemma for skew braces.

In Section \ref{sec:example}, we shall describe ways to construct skew braces $A$ for which 
\[(A*A)*\Ann_2(A)\neq 1.\]
 We shall also see that $A$ may be chosen to be perfect, so as a consequence, we obtain that:

\begin{cor}There exist perfect skew braces $A$ for which
\[ \Ann(A/\Ann(A))\neq 1.\]
\end{cor}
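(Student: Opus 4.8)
The plan is to obtain the corollary as a formal consequence of Theorem~\ref{thm1} together with an explicit construction, the construction being where essentially all the work lies. First I would isolate the following elementary reduction: if $A$ is \emph{any} skew brace with $(A*A)*\Ann_2(A)\neq 1$, then already $\Ann(A/\Ann(A))\neq 1$. Indeed, the hypothesis means there exist $y\in A*A\subseteq A$ and $a\in\Ann_2(A)$ with $y*a\neq 1$, so by the characterization of $\Ann(A)$ in Definition~\ref{def:center} (membership requires $x*a=1$ for all $x\in A$) we get $a\notin\Ann(A)$, whence $\Ann_2(A)\neq\Ann(A)$ and therefore $\Ann(A/\Ann(A))=\Ann_2(A)/\Ann(A)\neq 1$. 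Put differently, for perfect $A$ one has $A*A=A$, so (\ref{good1}) and (\ref{good2}) hold by Theorem~\ref{thm1}, and the single remaining obstruction to $\Ann_2(A)=\Ann(A)$ is the failure of (\ref{bad}). Thus the corollary reduces to exhibiting a \emph{perfect} skew brace $A$ with $(A*A)*\Ann_2(A)\neq 1$, which is exactly what the constructions of Section~\ref{sec:example} are designed to supply.

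For the construction I would first record a structural constraint that narrows the search. By Theorem~\ref{thm1}, for a perfect $A$ the identities $\Ann_2(A)*A=1$ and $[\Ann_2(A),A]=1$ hold, so $\Ann_2(A)\subseteq\mathrm{Soc}(A)$, while $\Ann(A)=\mathrm{Soc}(A)\cap Z(A,\circ)$. Hence what is needed is a perfect skew brace containing a socle element $a$ that is \emph{not} in $Z(A,\circ)$ but whose image in $A/\Ann(A)$ nonetheless lies in $\Ann(A/\Ann(A))$. In terms of the associated action --- writing $\lambda_a(b)=a^{-1}\cdot(a\circ b)$, so that $a*b=\lambda_a(b)\cdot b^{-1}$ and $a\mapsto\lambda_a$ is a homomorphism $(A,\circ)\to\Aut(A,\cdot)$ --- this demands $\lambda_a=\mathrm{id}$ and $a\in Z(A,\cdot)$, yet $\lambda_y(a)\neq a$ for some $y\in A$. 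Since the perfect skew braces of \cite[Section~3]{perfect} already violate $\mathrm{Soc}(A/\mathrm{Soc}(A))=1$, i.e.\ already have badly behaved socle elements, I would test those and mild variants of them for this stronger property first; failing that, I would search among skew braces built on small nilpotent groups, via semidirect-type or asymmetric products, for the combination ``$A=A*A$ and $(A*A)*\Ann_2(A)\neq 1$'', and then present the resulting example concretely by its data $(A,\cdot)$, $(A,\circ)$, $\lambda$.

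The main obstacle is the tension between the two requirements. Enlarging $\Ann_2(A)$ beyond $\Ann(A)$ is a smallness-type condition on $A$, and in the naive attempts it comes bundled with a nontrivial trivial quotient, which contradicts perfectness. The action $\lambda$ therefore has to be engineered on both ends at once: rich enough that the products $x*y$ generate all of $A$, and degenerate enough that $\lambda$ acts trivially ``from the left'' but not ``from the right'' on the relevant socle element --- and such an asymmetry in $*$ is possible only because $A$ fails to be two-sided (cf.\ Theorem~\ref{thm2}). Once a candidate is in hand, verifying $A=A*A$, computing $\mathrm{Soc}(A)$, $\Ann(A)$, and $\Ann_2(A)$, and pinning down an explicit pair $y\in A$, $a\in\Ann_2(A)$ with $y*a\neq 1$ are finite, routine checks.
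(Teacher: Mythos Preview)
Your reduction in the first paragraph is correct and is exactly the logic behind the paper's proof: once one has a perfect skew brace $A$ with $(A*A)*\Ann_2(A)\neq 1$, the corollary follows immediately, and the paper supplies such an $A$ via Proposition~\ref{prop3} (the semidirect product $\mathbb{F}_2^4\rtimes_\phi C$ with $\mathrm{Im}(\phi)\simeq S_4$) together with Examples~\ref{example1} and~\ref{example2} furnishing an admissible $C$. Your later paragraphs outline a more open-ended search strategy (starting from the examples in \cite[Section~3]{perfect}) rather than the paper's specific semidirect-product construction, but the overall approach is the same.
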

\begin{proof}This follows from Proposition \ref{prop3} and Examples \ref{example1}  and \ref{example2}.
\end{proof}
 
\section{Derived ideal and second annihilator}

In this section, let $A=(A,\cdot,\circ)$ be a skew brace. For each $a\in A$, define
\[ \lambda_a : A \longrightarrow A;\,\ \lambda_a(x)=a^{-1}\cdot (a\circ x),\]
which is easily checked to be an element of $\Aut(A,\cdot)$. The map
\[ \lambda : (A,\circ) \longrightarrow \Aut(A,\cdot);\,\ a\mapsto \lambda_a\]
is a morphism of groups (see \cite[Proposition 1.9]{GV}). For any $a,b\in A$, in terms of this map $\lambda$, we have the well-known identities
\[ a \circ b = a\cdot \lambda_a(b),\quad a\cdot b = a \circ \lambda_{\overline{a}}(b),\]
\[\overline{a} = \lambda_{\overline{a}}(a^{-1}),\quad a*b = \lambda_a(b)\cdot b^{-1},\]
where $\overline{a}$ denotes the inverse of $a$ with respect to $\circ$. Let us also write
\[ [a,b]  = a\cdot b\cdot a^{-1}\cdot b^{-1},\quad
[a,b]_\circ = a\circ b\circ \overline{a}\circ \overline{b}\]
for the commutators in the groups $(A,\cdot)$ and $(A,\circ)$, respectively. 
The following identities shall also be useful.

\begin{lem} For any $a,x,y\in A$, we have
\begin{align}
\label{iden1}
a* (x\cdot y) &= (a*x) \cdot x \cdot (a*y)\cdot x^{-1},\\
\label{iden2} 
(x\circ y) * a &= (x*(y*a))\cdot (y*a) \cdot (x*a),\\
\label{iden3}
\lambda_a(x*y) & = (a\circ x \circ \overline{a}) * \lambda_a(y).
\end{align}
\end{lem}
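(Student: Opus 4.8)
The plan is to verify all three identities by direct computation, relying throughout on the formula $a*b=\lambda_a(b)\cdot b^{-1}$, on the fact that each $\lambda_a$ is an automorphism of $(A,\cdot)$, and on the homomorphism property $\lambda_{a\circ b}=\lambda_a\circ\lambda_b$, which in particular gives $\lambda_{\overline{a}}=\lambda_a^{-1}$. No clever idea is needed; the content is bookkeeping, and the three identities are recorded here precisely so that the computations in later sections can quote them.

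For \eqref{iden1}, I would begin with $a*(x\cdot y)=\lambda_a(x\cdot y)\cdot(x\cdot y)^{-1}=\lambda_a(x)\cdot\lambda_a(y)\cdot y^{-1}\cdot x^{-1}$, using that $\lambda_a\in\Aut(A,\cdot)$. Substituting $\lambda_a(x)=(a*x)\cdot x$ and $\lambda_a(y)\cdot y^{-1}=a*y$ into this expression yields exactly $(a*x)\cdot x\cdot(a*y)\cdot x^{-1}$. For \eqref{iden2}, I would expand $(x\circ y)*a=\lambda_{x\circ y}(a)\cdot a^{-1}=\lambda_x\bigl(\lambda_y(a)\bigr)\cdot a^{-1}$ using the homomorphism property, then write $\lambda_y(a)=(y*a)\cdot a$ and apply $\lambda_x$ as an automorphism to obtain $\lambda_x(y*a)\cdot\lambda_x(a)\cdot a^{-1}$; substituting $\lambda_x(y*a)=(x*(y*a))\cdot(y*a)$ and $\lambda_x(a)=(x*a)\cdot a$ and cancelling the resulting $a\cdot a^{-1}$ gives the claimed $(x*(y*a))\cdot(y*a)\cdot(x*a)$.

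For \eqref{iden3}, I would compute both sides in terms of $\lambda$. On the right, $\lambda_{a\circ x\circ\overline{a}}=\lambda_a\lambda_x\lambda_{\overline{a}}=\lambda_a\lambda_x\lambda_a^{-1}$, so $(a\circ x\circ\overline{a})*\lambda_a(y)=\bigl(\lambda_a\lambda_x\lambda_a^{-1}\bigr)\bigl(\lambda_a(y)\bigr)\cdot\lambda_a(y)^{-1}=\lambda_a\bigl(\lambda_x(y)\bigr)\cdot\lambda_a(y)^{-1}$. On the left, $\lambda_a(x*y)=\lambda_a\bigl(\lambda_x(y)\cdot y^{-1}\bigr)=\lambda_a\bigl(\lambda_x(y)\bigr)\cdot\lambda_a(y)^{-1}$ since $\lambda_a$ is an automorphism of $(A,\cdot)$, and the two sides agree.

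The only point requiring care is the non-commutativity of $(A,\cdot)$: one must preserve the order of the factors at each step and avoid cancelling prematurely. Accordingly I do not anticipate any genuine obstacle; the proof is a short chain of substitutions of the identities listed just before the lemma.
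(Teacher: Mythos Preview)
Your proposal is correct and matches the paper's own treatment, which simply records the identities as ``straightforward'' (with a pointer to \cite{Tsa}) without writing out the computation. Your explicit substitutions using $a*b=\lambda_a(b)\cdot b^{-1}$, $\lambda_a\in\Aut(A,\cdot)$, and $\lambda_{a\circ b}=\lambda_a\lambda_b$ are exactly the intended verification.
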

\begin{proof}
Straightforward or see \cite[Lemmas 2.1 and 2.3]{Tsa}.
\end{proof}

In the following, we shall investigate the relationship between the derived ideal $A*A$ and the second annihilator $\Ann_2(A)$. In particular, we shall show that (\ref{good1}) and (\ref{good2}) always hold, and then give a characterization of (\ref{bad}).

\subsection{$\Ann_2(A)*(A*A)=1$} 
For each $a\in A$, consider the map
\[ \varphi_a: (A,\cdot) \longrightarrow (A*A,\cdot);\,\ \varphi_a(x) = a*x.\]
In the case that $a\in \Ann_2(A)$, we have $\mathrm{Im}(\varphi_a) \subseteq \Ann(A) \subseteq Z(A,\cdot)$, and we deduce from (\ref{iden1}) that $\varphi_a$ is a group homomorphism.

\begin{prop}\label{Prop1}We have $\Ann_2(A)*(A*A)=1$.
\end{prop}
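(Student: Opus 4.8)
The plan is to capitalize on the remark recorded just above the statement: when $a\in\Ann_2(A)$, the map $\varphi_a\colon(A,\cdot)\longrightarrow(A*A,\cdot)$ given by $\varphi_a(x)=a*x$ is a group homomorphism, because $\mathrm{Im}(\varphi_a)\subseteq\Ann(A)\subseteq Z(A,\cdot)$ makes (\ref{iden1}) collapse to $a*(x\cdot y)=(a*x)\cdot(a*y)$. Since $A*A$ is, by definition, the subgroup of $(A,\cdot)$ generated by the elements $x*y$ with $x,y\in A$, and $\varphi_a$ is a homomorphism, it suffices to show that $\varphi_a$ kills every such generator, that is, that $a*(x*y)=1$ for all $a\in\Ann_2(A)$ and $x,y\in A$. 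Granting this, $\varphi_a$ vanishes on all of $A*A$, so $a*z=1$ for every $z\in A*A$, and hence the subgroup $\Ann_2(A)*(A*A)$ of $(A,\cdot)$ is trivial.

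It remains to establish the identity $a*(x*y)=1$, which I would prove in the $\lambda$-formalism. Since $a*(x*y)=\lambda_a(x*y)\cdot(x*y)^{-1}$, the goal reduces to $\lambda_a(x*y)=x*y$. Using (\ref{iden3}) I would first rewrite this as $\lambda_a(x*y)=(a\circ x\circ\overline a)*\lambda_a(y)$. Here $a\circ x\circ\overline a=[a,x]_\circ\circ x$, and $c:=[a,x]_\circ$ lies in $\Ann(A)$ because the coset of $a$ in $A/\Ann(A)$ lies in the center of $(A/\Ann(A),\circ)$. Membership of $c$ in $\Ann(A)$ forces $\lambda_c=\mathrm{id}$, whence $\lambda_{c\circ x}=\lambda_c\lambda_x=\lambda_x$, so that $(c\circ x)*w=x*w$ for every $w\in A$ (alternatively this follows from (\ref{iden2}) and $c*w=1$ for all $w$); taking $w=\lambda_a(y)$ gives $\lambda_a(x*y)=x*\lambda_a(y)$. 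Finally, writing $d:=a*y$, we have $d\in\Ann(A)\subseteq Z(A,\cdot)$ (by the $*$-condition on the coset of $a$ in $A/\Ann(A)$) and $\lambda_a(y)=d\cdot y$; then (\ref{iden1}) together with $x*d=1$ and the centrality of $d$ gives $x*(d\cdot y)=x*y$. Chaining the equalities yields $\lambda_a(x*y)=x*y$, as required.

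I do not anticipate a genuine obstacle. The one point needing care is bookkeeping: at each step one must invoke the correct defining property of $\Ann_2(A)$ — namely that $[a,x]_\circ\in\Ann(A)$ (from $Z(A/\Ann(A),\circ)$) and that $a*y\in\Ann(A)$ (from the $*$-condition in $A/\Ann(A)$) — and one must use both consequences of lying in $\Ann(A)$, namely that $\lambda$ acts trivially there (so $c*w=1=w*c$ for all $w$) and that it is central in $(A,\cdot)$. The whole argument is short precisely because of the reduction to generators carried out in the first paragraph, which is nothing more than the homomorphism observation already made in the excerpt.
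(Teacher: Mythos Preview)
Your proof is correct and follows essentially the same approach as the paper: reduce via the homomorphism $\varphi_a$ to checking $a*(x*y)=1$, then use the identities (\ref{iden1})--(\ref{iden3}) together with the facts that $[a,x]_\circ\in\Ann(A)$ and $a*y\in\Ann(A)$. The only cosmetic difference is that the paper applies (\ref{iden3}) in the form $\lambda_x(a*y)=(x\circ a\circ\overline{x})*\lambda_x(y)$ (pulling $\lambda_x$ across) to arrive at $\varphi_a(x*y)=x*(a*y)=1$, whereas you apply it as $\lambda_a(x*y)=(a\circ x\circ\overline{a})*\lambda_a(y)$ (pulling $\lambda_a$ across) to arrive at $\lambda_a(x*y)=x*y$; the ingredients and length are the same.
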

\begin{proof} It suffices to check that $a*z=1$ for all $a\in \Ann_2(A)$ and $z\in A*A$. In other words, we want to show that $A*A\subseteq\ker(\varphi_a)$ for each $a\in \Ann_2(A)$. Since $\varphi_a$ is a homomorphism on $(A,\cdot)$, we only need to show that
\[ x*y\in\ker(\varphi_a)\mbox{ for all } x,y\in A\]
because these elements $x*y$ generate $A*A$ in $(A,\cdot)$. 

Let $a\in \Ann_2(A)$ and $x,y\in A$. Note that
\[ \varphi_a(x*y) = \varphi_a(\lambda_x(y))\cdot \varphi_a( y)^{-1} =( a*\lambda_x(y)) \cdot (a*y)^{-1}.\]
Since $[\overline{a},x]_\circ \in \Ann(A)$, we have $[\overline{a},x]_\circ * \lambda_x(y)=1$ and (\ref{iden2}) yields that
\[ a * \lambda_x(y)= (a \circ [\overline{a},x]_\circ ) * \lambda_x(y)
 = (x \circ a \circ \overline{x})*\lambda_x(y),\]
which in turn is equal to $\lambda_x(a*y)$ by (\ref{iden3}). Hence, we obtain
\[ \varphi_a(x*y) =  \lambda_x(a*y)\cdot (a*y)^{-1} = x * (a*y),\]
which is equal to $1$ because $a*y\in \Ann(A)$. This completes the proof.
\end{proof}

\subsection{$[\Ann_2(A),A*A]=1$} For each $a\in A$, consider the map
\[ \pi_a: (A,\cdot)\longrightarrow ([A,A],\cdot);\,\ \pi_a(x) = [a,x].\]
In the case that $a\in \Ann_2(A)$, we have $\mathrm{Im}(\pi_a) \subseteq \Ann(A) \subseteq Z(A,\cdot)$, and we deduce from the standard identity 
\[ [a,x\cdot y]=[a,x]\cdot x\cdot [a,y]\cdot x^{-1}\]
(note that (\ref{iden1}) is an analog of this) that $\pi_a$ is a group homomorphism.

\begin{prop}\label{Prop2} We have $[\Ann_2(A),A*A]=1$.
\end{prop}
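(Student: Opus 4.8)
The plan is to mimic the proof of Proposition \ref{Prop1} as closely as possible, replacing the operation $*$ by the group commutator $[\,,\,]$ where appropriate. As observed just before the statement, for $a \in \Ann_2(A)$ the map $\pi_a$ is a group homomorphism from $(A,\cdot)$ into the central (hence abelian) subgroup $\Ann(A)$, so it suffices to show that every generator $x*y$ of $A*A$ lies in $\ker(\pi_a)$, i.e.\ that $[a, x*y] = 1$ for all $a \in \Ann_2(A)$ and $x,y \in A$. The natural strategy is to rewrite $[a, x*y]$ using $x*y = \lambda_x(y)\cdot y^{-1}$: since $\pi_a$ is a homomorphism, $[a, x*y] = [a,\lambda_x(y)]\cdot[a,y]^{-1}$ (using centrality to drop the conjugating factors). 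Thus the goal becomes showing $[a,\lambda_x(y)] = [a,y]$ for all such $a,x,y$.

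To establish $[a, \lambda_x(y)] = [a, y]$, I would try to exploit the fact that $a \in \Ann_2(A)$ means $a$ becomes central (in both operations, and killed by $*$) modulo $\Ann(A)$, and in particular $[\overline{a}, x]_\circ \in \Ann(A)$, exactly as used in Proposition \ref{Prop1}. The key should be an identity expressing $[a, \lambda_x(y)]$ in terms of $\lambda_x$ applied to some commutator. Recall $\lambda_x \in \Aut(A,\cdot)$, so $\lambda_x([u,v]) = [\lambda_x(u),\lambda_x(v)]$; if I can show $\lambda_x(a) = a \cdot (\text{something in }\Ann(A))$, or more precisely relate $a$ and $\lambda_x(a)$ modulo the center, then $[a,\lambda_x(y)] = [\lambda_x(a),\lambda_x(y)] = \lambda_x([a,y])$, and since $[a,y] \in \Ann(A)$ and $\lambda_x$ fixes $\Ann(A)$ pointwise (as $\Ann(A)$ is killed by $*$, $\lambda_x(c) = (x*c)\cdot c = c$ for $c \in \Ann(A)$), this would give $\lambda_x([a,y]) = [a,y]$, completing the argument. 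The one link I need is $\lambda_x(a) \equiv a$ modulo the kernel of $\pi_{(\cdot)}$ on $\Ann(A)$-valued things — more carefully, I need $[\lambda_x(a), \lambda_x(y)] = [a, \lambda_x(y)]$, which would follow from $\lambda_x(a) \cdot a^{-1} \in Z(A,\cdot)$; and indeed $\lambda_x(a)\cdot a^{-1} = x*a = 1$ since $a \in \Ann_2(A)$ is annihilated on the right as well. So in fact $\lambda_x(a) = a$ outright, and the computation collapses: $[a,\lambda_x(y)] = [\lambda_x(a),\lambda_x(y)] = \lambda_x([a,y]) = [a,y]$.

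So the clean write-up is: fix $a \in \Ann_2(A)$ and $x,y\in A$. Since $x * a = 1$ by definition of $\Ann_2(A)$ (more precisely $a \in \Ann_2(A)$ gives $x*a \in \Ann(A)$; but we want it to actually be $1$ — here one must be careful, since $a \in \Ann_2(A)$ only guarantees $a*x, x*a \in \Ann(A)$, not $=1$), I should double-check whether $\lambda_x(a) = a$ really holds or only $\lambda_x(a) \equiv a \pmod{\Ann(A)}$. If only the latter, then $\lambda_x(a) = a \cdot c$ with $c \in \Ann(A) \subseteq Z(A,\cdot)$, and still $[\lambda_x(a), \lambda_x(y)] = [a\cdot c, \lambda_x(y)] = [a, \lambda_x(y)]$ because $c$ is central. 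So either way $[a, \lambda_x(y)] = [\lambda_x(a),\lambda_x(y)] = \lambda_x([a,y])$, and since $[a,y] \in \Ann(A)$ with $\lambda_x$ acting trivially on $\Ann(A)$, this equals $[a,y]$. Therefore $\pi_a(x*y) = [a,\lambda_x(y)]\cdot[a,y]^{-1} = 1$, and $\pi_a$ being a homomorphism finishes the proof.

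The main obstacle I anticipate is purely bookkeeping: making sure that the passage $[a, x*y] = [a,\lambda_x(y)]\cdot[a,y]^{-1}$ is legitimate (it uses $\mathrm{Im}(\pi_a) \subseteq Z(A,\cdot)$, which holds), and confirming that $\lambda_x$ fixes $\Ann(A)$ pointwise (from $\lambda_x(c) = (x*c)\cdot c$ and $x*c = 1$ for $c \in \Ann(A)$, since $\Ann(A)$ is annihilated on both sides). Neither step is deep, so I expect this proof to be shorter than that of Proposition \ref{Prop1}; the conceptual content is that conjugation by $a \in \Ann_2(A)$ lands in $\Ann(A)$, on which both $\lambda$ and $*$ are trivial, so the obstruction $[a,\lambda_x(y)]$ versus $[a,y]$ disappears.
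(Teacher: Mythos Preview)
Your proposal is correct and follows essentially the same route as the paper's proof: reduce to $\pi_a(x*y)=[a,\lambda_x(y)]\cdot[a,y]^{-1}$, use $x*a\in\Ann(A)\subseteq Z(A,\cdot)$ to replace $a$ by $\lambda_x(a)$ inside the commutator, and then apply $\lambda_x\in\Aut(A,\cdot)$ together with the triviality of $\lambda$ (equivalently of $*$) on $\Ann(A)$. The paper phrases the final step as $\pi_a(x*y)=\lambda_x([a,y])\cdot[a,y]^{-1}=x*[a,y]=1$, which is exactly your observation that $\lambda_x$ fixes $[a,y]\in\Ann(A)$.
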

\begin{proof}
It is enough to show that $[a,z]=1$ for all $a\in \Ann_2(A)$ and $z\in A*A$. In other words, we want to show that $A*A\subseteq\ker(\pi_a)$ for each $a\in \Ann_2(A)$. Since $\pi_a$ is a homomorphism on $(A,\cdot)$, we only need to check that
\[ x*y\in\ker(\pi_a)\mbox{ for all } x,y\in A\]
because these elements $x*y$ generate $A*A$ in $(A,\cdot)$. 

Let $a\in \Ann_2(A)$ and $x,y\in A$. Note that
\[ \pi_a(x*y) = \pi_a(\lambda_x(y))\cdot \pi_a(y)^{-1} = [a,\lambda_x(y)] \cdot [a,y]^{-1}.\]
Since $x*a\in \Ann(A)\subseteq Z(A,\cdot)$, we have
\[ [a,\lambda_x(y)] = [(x*a)a,\lambda_x(y)] = [\lambda_x(a),\lambda_x(y)],\]
which in turn is equal to $\lambda_x([a,y])$ because $\lambda_x\in\Aut(A,\cdot)$. It follows that
\[ \pi_a(x*y) = \lambda_x([a,y])\cdot [a,y]^{-1} = x * [a,y],\]
which is equal to $1$ because $[a,y]\in \Ann(A)$. This completes the proof.
\end{proof}

\subsection{$(A*A)*\Ann_2(A)=1$} For each $a\in A$, consider the map
\[ \psi_a : (A,\circ) \longrightarrow (A*A,\cdot);\,\ \psi_a(x) = x*a.\]
Note that unlike the $\phi_a$ and $\pi_a$ considered in the previous subsections, here we use the group operation $\circ$ in domain. In the case that $a\in \Ann_2(A)$, we have $\mathrm{Im}(\psi_a)\subseteq \Ann(A)\subseteq Z(A,\cdot)$, and we see from (\ref{iden2}) that $\psi_a$ is a group homomorphism since $A*\Ann(A)=1$. 
 
Now, it is obvious that
\begin{align}\label{char}
(A*A)*\Ann_2(A) = 1& \iff \forall a\in \Ann_2(A),\, z\in A*A: z*a = 1\\\notag
& \iff \forall a\in \Ann_2(A): A*A \subseteq \ker(\psi_a).
\end{align}
Let us consider when this last inclusion is satisfied.

\begin{prop}\label{char1}For each $a\in \Ann_2(A)$, we have
\[ \psi_a\mbox{ is a homomorphism on $(A,\cdot)$}\,\ \iff \,\ A*A \subseteq \ker(\psi_a).\]
 \end{prop}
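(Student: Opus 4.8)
The plan is to prove the two implications separately, using two facts already available just above the statement: (i) for $a\in\Ann_2(A)$ the map $\psi_a$ is a group homomorphism from $(A,\circ)$ to $(A*A,\cdot)$, and (ii) the elementary identities $a'\circ b=a'\cdot\lambda_{a'}(b)$, \ $a\cdot b=a\circ\lambda_{\overline a}(b)$, and $a*b=\lambda_a(b)\cdot b^{-1}$. In both directions the key intermediate claim is that $\psi_a(\lambda_{a'}(b))=\psi_a(b)$ for all $a',b\in A$; the two directions differ only in how this is obtained.

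For the implication ``$\Longrightarrow$'', suppose in addition that $\psi_a$ is a homomorphism on $(A,\cdot)$. Applying $\psi_a$ to both sides of $a'\circ b=a'\cdot\lambda_{a'}(b)$, using homomorphy with respect to $\circ$ on the left-hand side and with respect to $\cdot$ on the right-hand side, and cancelling the common factor $\psi_a(a')$ in the group $(A*A,\cdot)$, one gets $\psi_a(\lambda_{a'}(b))=\psi_a(b)$. Since $\psi_a$ is a $\cdot$-homomorphism, this yields
\[ \psi_a(a'*b)=\psi_a\bigl(\lambda_{a'}(b)\cdot b^{-1}\bigr)=\psi_a(\lambda_{a'}(b))\cdot\psi_a(b)^{-1}=1. \]
As the elements $a'*b$ generate $A*A$ as a subgroup of $(A,\cdot)$ and $\psi_a$ is a $\cdot$-homomorphism, it follows that $A*A\subseteq\ker(\psi_a)$.

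For the implication ``$\Longleftarrow$'', suppose $A*A\subseteq\ker(\psi_a)$. The key observation is that, because $A*A$ is an ideal of $A$, axiom (3) of Definition \ref{def:quotient} gives $c\cdot(A*A)=c\circ(A*A)$ for every $c\in A$, so the $\cdot$-cosets and the $\circ$-cosets of $A*A$ coincide. Since $\psi_a$ is a $\circ$-homomorphism with $A*A$ in its kernel, it is constant on each $\circ$-coset of $A*A$, hence constant on each $\cdot$-coset of $A*A$. Now for arbitrary $x,y\in A$ we have $\lambda_{\overline x}(y)=(\overline x*y)\cdot y$ with $\overline x*y\in A*A$, so $\lambda_{\overline x}(y)$ lies in the $\cdot$-coset $y\cdot(A*A)$ (using that $A*A$ is normal in $(A,\cdot)$), whence $\psi_a(\lambda_{\overline x}(y))=\psi_a(y)$. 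Combining this with $x\cdot y=x\circ\lambda_{\overline x}(y)$ and $\circ$-homomorphy gives
\[ \psi_a(x\cdot y)=\psi_a\bigl(x\circ\lambda_{\overline x}(y)\bigr)=\psi_a(x)\cdot\psi_a(\lambda_{\overline x}(y))=\psi_a(x)\cdot\psi_a(y), \]
so $\psi_a$ is a homomorphism on $(A,\cdot)$.

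The main obstacle is the converse direction: the naive attack — expanding $(w\cdot y)*a$ for $w\in A*A$ by means of the available $*$-identities — is self-defeating, since each expansion produces a new element of $A*A$ sitting to the left of $y$, and the process never terminates and never reduces to $\psi_a(y)$. The argument above avoids all such computation by exploiting ideal axiom (3): it is precisely this axiom that makes the $\circ$-coset structure of $A*A$ (on which $\psi_a$ is visibly constant) coincide with the $\cdot$-coset structure (which is what controls the position of $\lambda_{\overline x}(y)$ relative to $y$). The forward direction, by contrast, is routine manipulation of the homomorphism properties.
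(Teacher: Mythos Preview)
Your proof is correct and follows the same overall strategy as the paper: in both directions the crux is the identity $\psi_a(\lambda_{\overline{x}}(y))=\psi_a(y)$, combined with $x\cdot y=x\circ\lambda_{\overline{x}}(y)$ and the $\circ$-homomorphism property of $\psi_a$. The forward directions are essentially identical. In the converse, however, your argument is more conceptual than the paper's: the paper establishes $\psi_a(\lambda_{\overline{x}}(y))=\psi_a(y)$ by explicitly computing that $\overline{y}\circ\lambda_{\overline{x}}(y)=\overline{y}\cdot\bigl((\overline{y}\circ\overline{x}\circ y)*\overline{y}^{-1}\bigr)\cdot\overline{y}^{-1}$ is a $\cdot$-conjugate of an element of $A*A$, whereas you bypass this computation entirely by invoking ideal axiom~(3) to identify the $\cdot$- and $\circ$-cosets of $A*A$, so that $\lambda_{\overline{x}}(y)=(\overline{x}*y)\cdot y\in(A*A)\cdot y=y\circ(A*A)$ immediately gives the result. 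Your route is shorter and more transparent; the paper's has the minor advantage of being self-contained (it does not need to recall that $A*A$ is an ideal).
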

\begin{proof}First, suppose that $\psi_a$ is a homomorphism on $(A,\cdot)$, Then it suffices to check that $x*y\in \ker(\psi_a)$ for all $x,y\in A$ since these elements $x*y$ generate $A*A$ in $(A,\cdot)$. But $\psi_a$ is also a homomorphism on $(A,\circ)$, so clearly
\begin{align*}
\psi_a(x*y) & = \psi_a(x^{-1}\cdot (x\circ y)\cdot y^{-1}) \\
& = \psi_a(x)^{-1}\cdot\psi(x)\cdot \psi_a(y)\cdot\psi_a(y)^{-1}\\
& =1.
\end{align*}
Conversely, suppose that $A*A\subseteq \ker(\psi_a)$. For any $x,y\in A$, first we write
\[\psi_a(x\cdot y) = \psi_a(x\circ \lambda_{\overline{x}}(y)) = \psi_a(x) \cdot \psi_a(\lambda_{\overline{x}}(y)).\]
For the second term, we compute that
\begin{align*}
\psi_a(\lambda_{\overline{x}}(y)) & = \psi_a(y) \cdot \psi_a(y)^{-1}\cdot \psi_a(\lambda_{\overline{x}}(y)) \\
&= \psi_a(y) \cdot \psi_a(\overline{y}\circ \lambda_{\overline{x}}(y))\\
& = \psi_a(y)\cdot \psi_a(\overline{y}\cdot\lambda_{\overline{y}\circ \overline{x} \circ y}(\overline{y}^{-1}))\\
& = \psi_a(y)\cdot \psi_a( \overline{y}\cdot ((\overline{y}\circ \overline{x}\circ y) * \overline{y}^{-1})\cdot \overline{y}^{-1})\\
& =\psi_a(y),
\end{align*}
where last equality holds because $A*A$ is normal in $(A,\cdot)$ and so
\[\overline{y}\cdot ((\overline{y}\circ \overline{x}\circ y) * \overline{y}^{-1})\cdot \overline{y}^{-1} \in A*A \subseteq \ker(\psi_a)\]
(see \cite[Corollary 2.2]{Tsa} for example). Thus, we have $\psi_a(x\cdot y) = \psi_a(x)\cdot \psi_a(y)$, whence $\psi_a$ is a homomorphism on $(A,\cdot)$.
\end{proof}

For each $a\in A$, consider the inner automorphism
\[ \iota_a : (A,\circ) \longrightarrow (A,\circ) ;\,\ \iota_a(x) = a\circ x\circ\overline{a}.\]
It is basically known in the literature (see \cite[Lemma 4.1]{TN} or \cite[Proposition 2.3]{two sided} for example) that for any $x,y\in A$, we have 
\[ \iota_a(x\cdot y)=\iota_a(x)\cdot \iota_a(y) \,\ \iff \,\  (x\cdot y)\circ \overline{a} = (x\circ \overline{a})\cdot \overline{a}^{-1}\cdot(y\circ \overline{a}).\]
In other words, we have $\iota_a\in \Aut(A,\cdot)$ if and only if the right brace relation holds when the element on the right of $\circ$ is fixed to be $\overline{a}$. Let us now show that the two equivalent conditions in Proposition \ref{char1} may be characterized in terms of these inner automorphisms.

\begin{prop}\label{char2} For each $a\in \Ann_2(A)$, we have the relation
\[ \iota_a(x) = a\lambda_a(x)a^{-1}\cdot\psi_{\overline{a}}(x)\]
for all $x\in A$. Moreover, we have the equivalence
\[ \psi_{\overline{a}}\mbox{ is a homomorphism on $(A,\cdot)$}\,\ \iff \,\ \iota_a\in \Aut(A,\cdot).\]
\end{prop}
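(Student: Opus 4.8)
The plan is to prove the displayed identity by expanding $\iota_a(x)=a\circ x\circ\overline{a}$ directly, and then to deduce the asserted equivalence from it. First I would use $b\circ c=b\cdot\lambda_b(c)$ together with the fact that $\lambda\colon(A,\circ)\longrightarrow\Aut(A,\cdot)$ is a group morphism (so that $\lambda_{a\circ x}=\lambda_a\circ\lambda_x$) to write
\[
\iota_a(x)=(a\circ x)\circ\overline{a}=(a\circ x)\cdot\lambda_{a\circ x}(\overline{a})=a\cdot\lambda_a(x)\cdot\lambda_a\bigl(\lambda_x(\overline{a})\bigr).
\]
Next, the identity $x*\overline{a}=\lambda_x(\overline{a})\cdot\overline{a}^{-1}$ lets me rewrite $\lambda_x(\overline{a})=\psi_{\overline{a}}(x)\cdot\overline{a}$, while $\overline{a}=\lambda_{\overline{a}}(a^{-1})$ together with $\lambda_a\circ\lambda_{\overline{a}}=\lambda_{a\circ\overline{a}}=\lambda_1=\mathrm{id}$ gives $\lambda_a(\overline{a})=a^{-1}$. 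Substituting these and using that $\lambda_a$ is an automorphism of $(A,\cdot)$ turns the last display into
\[
\iota_a(x)=a\cdot\lambda_a(x)\cdot\lambda_a\bigl(\psi_{\overline{a}}(x)\bigr)\cdot a^{-1}.
\]

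The key point that finishes the first assertion is that $\psi_{\overline{a}}(x)\in\Ann(A)$ for every $x\in A$. Indeed, $\Ann_2(A)$ is an ideal, hence a subgroup of $(A,\circ)$, so from $a\in\Ann_2(A)$ we get $\overline{a}\in\Ann_2(A)$, and then the definition of the second annihilator yields $x*\overline{a}\in\Ann(A)$. Now any $y\in\Ann(A)$ satisfies $\lambda_x(y)=y$ for all $x\in A$ (since $x*y=\lambda_x(y)\cdot y^{-1}=1$) and also $y\in Z(A,\cdot)$. Applying the first fact with $y=\psi_{\overline{a}}(x)$ removes the outer $\lambda_a$, and then centrality lets me move $\psi_{\overline{a}}(x)$ past $a^{-1}$, giving exactly $\iota_a(x)=a\lambda_a(x)a^{-1}\cdot\psi_{\overline{a}}(x)$.

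For the equivalence I would set $\mu_a(x)=a\lambda_a(x)a^{-1}$, which lies in $\Aut(A,\cdot)$ as the composite of $\lambda_a$ with conjugation by $a$, so the relation becomes $\iota_a(x)=\mu_a(x)\cdot\psi_{\overline{a}}(x)$ with $\mathrm{Im}(\psi_{\overline{a}})\subseteq\Ann(A)\subseteq Z(A,\cdot)$. Since $\psi_{\overline{a}}(x)$ is central, expanding
\[
\iota_a(x\cdot y)=\mu_a(x)\mu_a(y)\cdot\psi_{\overline{a}}(x\cdot y),\qquad \iota_a(x)\cdot\iota_a(y)=\mu_a(x)\mu_a(y)\cdot\psi_{\overline{a}}(x)\psi_{\overline{a}}(y)
\]
shows that $\iota_a$ is multiplicative on $(A,\cdot)$ if and only if $\psi_{\overline{a}}$ is. Finally $\iota_a$ is a bijection of $A$, being conjugation in the group $(A,\circ)$, so it is multiplicative on $(A,\cdot)$ precisely when it lies in $\Aut(A,\cdot)$; combining the two equivalences gives the statement.

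I do not anticipate a real obstacle; the only delicate part is the bookkeeping of which elements are central in $(A,\cdot)$ and which are fixed by the maps $\lambda_x$. The one place where the hypothesis genuinely enters is in passing from $a\in\Ann_2(A)$ to $\overline{a}\in\Ann_2(A)$ and hence to $x*\overline{a}\in\Ann(A)$, which is what makes both simplifications in the computation legitimate.
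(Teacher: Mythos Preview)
Your proof is correct and follows essentially the same route as the paper's. Both arguments expand $\iota_a(x)=a\circ x\circ\overline{a}$ via the $\lambda$-map, reduce using $\lambda_a(\overline{a})=a^{-1}$, and then exploit that $x*\overline{a}\in\Ann(A)$ is central in $(A,\cdot)$ and fixed by every $\lambda_b$; the only cosmetic difference is that the paper writes $\lambda_a(x*\overline{a})=(a*(x*\overline{a}))\cdot(x*\overline{a})$ and kills the first factor via $A*\Ann(A)=1$, whereas you phrase the same step as ``$\lambda_a$ fixes $\Ann(A)$''.
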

\begin{proof} For the equality, we first compute that
\begin{align*}
\iota_a(x) & = a\circ x\circ \overline{a}\\
&= a\cdot \lambda_a(x\cdot (x*\overline{a})\cdot \overline{a})\\
& = a\cdot \lambda_a(x)\cdot  \lambda_a(x*\overline{a})\cdot \lambda_a(\overline{a})\\
& = a\cdot \lambda_a(x)\cdot (a*(x*\overline{a}))\cdot (x*\overline{a})\cdot a^{-1}.
\end{align*}
Since $x*\overline{a}\in \Ann(A)\subseteq Z(A,\cdot)$, we can move it to the right of $a^{-1}$, and also the third term vanishes because $A*\Ann(A)=1$. We thus obtain
\[ \iota_a(x) = a\lambda_a(x)a^{-1}\cdot \psi_{\overline{a}}(x),\]
as claimed. For any $x,y\in A$, we then get that
\begin{align*}
\iota_a(x\cdot y) & =a\lambda_a(x\cdot y)a^{-1}\cdot \psi_{\overline{a}}(x\cdot y)\\
&=a\lambda_a(x)\lambda_a(y)a^{-1}\cdot \psi_{\overline{a}}(x\cdot y),\\
\iota_a(x)\cdot \iota_a(y)& =a\lambda_a(x)a^{-1}\cdot \psi_{\overline{a}}(x) \cdot a\lambda_a(y)a^{-1}\cdot \psi_{\overline{a}}(y)\\
&=a\lambda_a(x)\lambda_a(y)a^{-1}\cdot \psi_{\overline{a}}(x)\cdot\psi_{\overline{a}}(y),
\end{align*}
where we have again used the fact that $\psi_{\overline{a}}(x) = x*\overline{a}\in  Z(A,\cdot)$. Thus
\[ \iota_a(x\cdot y) = \iota_a(x)\cdot \iota_a(y) \,\ \iff \,\ \psi_{\overline{a}}(x\cdot y) = \psi_{\overline{a}}(x)\cdot \psi_{\overline{a}}(y),\]
and this proves the equivalence.
\end{proof}

\begin{cor}\label{cor} For any skew brace $A$, we have
\[ (A*A)*\Ann_2(A) = 1 \,\ \iff \,\ \forall a\in \Ann_2(A) : \iota_a\in \Aut(A,\cdot).\]
In particular, we have $(A*A)*\Ann_2(A)=1$ whenever $A$ is two-sided.
\end{cor}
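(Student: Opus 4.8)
The plan is to prove the corollary in two moves: first the general equivalence, then the implication for two-sided braces. For the equivalence, I would start from the chain of equivalences already recorded in \eqref{char}, which reduces $(A*A)*\Ann_2(A)=1$ to the condition $A*A\subseteq\ker(\psi_a)$ for every $a\in\Ann_2(A)$. By Proposition \ref{char1}, this inclusion is in turn equivalent to $\psi_a$ being a homomorphism on $(A,\cdot)$ for every $a\in\Ann_2(A)$. The only remaining gap is to convert ``$\psi_a$ is a homomorphism for all $a\in\Ann_2(A)$'' into ``$\iota_a\in\Aut(A,\cdot)$ for all $a\in\Ann_2(A)$''. Here I would invoke Proposition \ref{char2}, but with a small bookkeeping point: that proposition gives the equivalence $\psi_{\overline a}\text{ is a homomorphism on }(A,\cdot)\iff\iota_a\in\Aut(A,\cdot)$, phrased in terms of $\overline a$. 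Since $a\mapsto\overline a$ is a bijection of $\Ann_2(A)$ onto itself (as $\Ann_2(A)$ is a subgroup of $(A,\circ)$), quantifying over all $a\in\Ann_2(A)$ on one side is the same as quantifying over all $a\in\Ann_2(A)$ on the other, and the equivalence follows.

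For the second assertion, suppose $A$ is two-sided. Then the right brace relation $(b\cdot c)\circ a=(b\circ a)\cdot a^{-1}\cdot(c\circ a)$ holds for all $a,b,c\in A$; in particular it holds with $a$ replaced by $\overline a$ for any fixed $a$, which is exactly the condition ``$(x\cdot y)\circ\overline a=(x\circ\overline a)\cdot\overline a^{-1}\cdot(y\circ\overline a)$ for all $x,y$''. By the characterization recalled just before Proposition \ref{char2} (from \cite[Lemma 4.1]{TN} or \cite[Proposition 2.3]{two sided}), this says precisely that $\iota_a\in\Aut(A,\cdot)$. So for a two-sided brace, $\iota_a\in\Aut(A,\cdot)$ holds for \emph{every} $a\in A$, a fortiori for every $a\in\Ann_2(A)$, and the first part of the corollary then gives $(A*A)*\Ann_2(A)=1$.

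I do not anticipate a serious obstacle here: the substantive content is already packaged into Propositions \ref{char1} and \ref{char2} and into \eqref{char}, so the corollary is essentially a matter of threading these together correctly. The one place demanding a moment's care is the $a\leftrightarrow\overline a$ substitution when passing between the $\psi_a$ language and the $\iota_a$ language; I would state explicitly that $\Ann_2(A)$ is closed under $\circ$-inversion so that the universal quantifier is preserved. Everything else — that the right brace relation specializes to the fixed-argument form needed, and that this form is the known criterion for $\iota_a$ to be a $\cdot$-automorphism — is immediate from the definitions and the cited literature.
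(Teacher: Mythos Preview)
Your proposal is correct and follows essentially the same approach as the paper, which simply cites \eqref{char} together with Propositions \ref{char1} and \ref{char2}; you have spelled out the threading (including the $a\leftrightarrow\overline a$ bookkeeping and the two-sided case) that the paper leaves implicit.
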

\begin{proof} This follows from  (\ref{char}) and Propositions \ref{char1} and \ref{char2}.
\end{proof}

\section{Constructing counterexamples}\label{sec:example}

In this section, we use semidirect products to construct examples of skew braces $A$ for which $(A*A)*\Ann_2(A)\neq 1$. Such skew braces are necessarily non-two-sided by Corollary \ref{cor}. We shall also show that $A$ may be chosen to be perfect, thus yielding counterexamples to the analog of Gr\"{u}n's lemma.

\begin{prop}\label{semi}Let $B = (B,\cdot,\circ)$ and $C=(C,\cdot,\circ)$ be skew braces. Let
\[ \phi : (C,\circ) \longrightarrow \Aut(B,\cdot)\cap \Aut(B,\circ);\,\ c\mapsto \phi_c\]
be any group homomorphism and define
\begin{align*}
(b_1,c_1)\cdot (b_2,c_2) & = (b_1\cdot b_2, c_1\cdot c_2) \\
(b_1,c_1)\circ (b_2,c_2)&= (b_1\circ \phi_{c_1}(b_2),c_1\circ c_2)
\end{align*}
on the set $B\times C$. Then $(B\times C,\cdot,\circ)$ is a skew brace. Moreover, we have
\begin{equation}\label{* iden} (b_1,c_1) * (b_2,c_2) = (b_1^{-1}\cdot (b_1\circ \phi_{c_1}(b_2))\cdot b_2^{-1},c_1 *c_2)\end{equation}
for any $b_1,b_2\in B$ and $c_1,c_2\in C$.
\end{prop}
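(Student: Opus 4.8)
The plan is to verify directly that $(B\times C,\cdot,\circ)$ satisfies the axioms of a skew brace and then to establish the formula \eqref{* iden} by unwinding the definitions. First I would check that $(B\times C,\cdot)$ is a group: this is immediate since it is just the direct product of the groups $(B,\cdot)$ and $(C,\cdot)$. Next I would check that $(B\times C,\circ)$ is a group: here the operation is the semidirect product of $(B,\circ)$ and $(C,\circ)$ twisted by $\phi$, so associativity and the existence of inverses follow from the fact that $\phi$ is a group homomorphism into $\Aut(B,\circ)$; the identity element is $(1,1)$, which agrees with the identity of $(B\times C,\cdot)$ as it must. The one genuinely new verification is the left brace relation
\[ (b_1,c_1)\circ\bigl((b_2,c_2)\cdot(b_3,c_3)\bigr) = \bigl((b_1,c_1)\circ(b_2,c_2)\bigr)\cdot (b_1,c_1)^{-1}\cdot \bigl((b_1,c_1)\circ(b_3,c_3)\bigr).\]
Computing both sides coordinatewise, the $C$-coordinate reduces to the left brace relation in $C$, while the $B$-coordinate reduces, after pulling $\phi_{c_1}$ through the product $b_2\cdot b_3$ (using $\phi_{c_1}\in\Aut(B,\cdot)$), to the left brace relation in $B$ applied to $b_1$, $\phi_{c_1}(b_2)$, $\phi_{c_1}(b_3)$. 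So the assumption that $\phi$ lands in $\Aut(B,\cdot)\cap\Aut(B,\circ)$ is used twice, once for each group structure, and this is exactly what makes the construction work.

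For the formula \eqref{* iden}, I would simply compute $(b_1,c_1)*(b_2,c_2) = (b_1,c_1)^{-1}\cdot\bigl((b_1,c_1)\circ(b_2,c_2)\bigr)\cdot(b_2,c_2)^{-1}$ using the two given operations. The inverse of $(b_1,c_1)$ with respect to $\cdot$ is $(b_1^{-1},c_1^{-1})$, and $(b_1,c_1)\circ(b_2,c_2) = (b_1\circ\phi_{c_1}(b_2),\,c_1\circ c_2)$ by definition, so multiplying these three elements together in $(B\times C,\cdot)$ gives the $B$-coordinate $b_1^{-1}\cdot(b_1\circ\phi_{c_1}(b_2))\cdot b_2^{-1}$ and the $C$-coordinate $c_1^{-1}\cdot(c_1\circ c_2)\cdot c_2^{-1} = c_1*c_2$, which is precisely \eqref{* iden}.

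The main obstacle, to the extent there is one, is purely bookkeeping: keeping track of which operation ($\cdot$ or $\circ$, in $B$, in $C$, or in $B\times C$) is in play at each step, and being careful that $\phi_{c_1}$ is applied only to the second argument of $\circ$ and respects both products. There is no conceptual difficulty; the verification is the standard one showing that a semidirect product of skew braces (twisted by a homomorphism into the automorphism group of the additive structure that also preserves the circle structure) is again a skew brace. I would present the left brace relation check in reasonable detail and note that the right brace relation is \emph{not} claimed here — indeed the whole point of the subsequent examples is that $B\times C$ will typically fail to be two-sided even when $B$ and $C$ are.
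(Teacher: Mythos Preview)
Your proposal is correct and follows essentially the same approach as the paper: the paper simply declares the skew brace construction well-known and straightforward (citing \cite[Corollary 2.37]{SB}), then computes \eqref{* iden} exactly as you do by expanding the definition of $*$ coordinatewise. If anything, your outline of the left brace relation verification is more detailed than what the paper provides.
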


The skew brace $(B\times C,\cdot,\circ)$ constructed above is denoted by $B\rtimes_\phi C$ and is called the \textit{semidirect product} of $B$ and $C$ with respect to $\phi$. 

\begin{proof} This construction is well-known (mentioned in \cite[Corollary 2.37]{SB} for example), and the proof is straightforward. As for (\ref{* iden}), we compute that
\begin{align*}
(b_1,c_1) * (b_2,c_2)& = (b_1,c_1)^{-1}\cdot ((b_1,c_1)\circ (b_2,c_2)) \cdot (b_2,c_2)^{-1}\\
&= (b_1^{-1},c_1^{-1})\cdot (b_1\circ \phi_{c_1}(b_2),c_1\circ c_2)\cdot (b_2^{-1},c_2^{-1})\\
&= (b_1^{-1}\cdot (b_1\circ \phi_{c_1}(b_2))\cdot b_2^{-1},c_1 *c_2),
\end{align*}
which is as claimed.
 \end{proof}

We now specialize to the case when $B = (B,+,+)$ is a trivial \textit{brace}, which means that the operation $+$ is taken to be commutative. Here we denote the operation of  $B$ by $+$ because we shall take $B$ to be a vector space to produce explicit examples. Note that in this case, the identity (\ref{* iden}) becomes
\begin{equation}\label{*B} (b_1,c_1) * (b_2,c_2) = ((\phi_{c_1}-\mathrm{id}_B)(b_2),c_1 *c_2)\end{equation}
for any $b_1,b_2\in B$ and $c_1,c_2\in C$. We shall also take $C$ to be perfect. 

\begin{prop}\label{sd prop}Let $B = (B,+,+)$ be a trivial brace and let $C = (C,\cdot,\circ)$ be a perfect skew brace. Consider the semidirect product $A = B\rtimes_\phi C$, where
\[ \phi : (C,\circ)\longrightarrow \Aut(B,+);\,\ c\mapsto \phi_c\]
is any group homomorphism. Then we have the equality 
\begin{equation}\label{A*A}
 A*A = \Bigg\langle \bigcup_{\xi\in\mathrm{Im}(\phi)}\mathrm{Im}(\xi - \mathrm{id}_B)\Bigg\rangle\times C  ,
 \end{equation}
 and also the inclusions
\[\mathrm{Fix}_{\phi}(B) \times \{1\}\subseteq \Ann(A)\subseteq \mathrm{Fix}_{\phi}(B) \times (\ker(\phi)\cap \Ann(C)),\]
where $\mathrm{Fix}_{\phi}(B)$ is the fixed-point subgroup of $\mathrm{Im}(\phi)$, namely
\[ \mathrm{Fix}_{\phi}(B) = \bigcap_{\xi\in \mathrm{Im}(\phi)} \ker(\xi - \mathrm{id}_B).\]
In particular, we have
\begin{equation}\label{Ann(A)} \Ann(A) = \mathrm{Fix}_{\phi}(B) \times \{1\}\mbox{ when }\Ann(C)=1.\end{equation}
\end{prop}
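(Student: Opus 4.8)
The plan is to verify the three assertions of Proposition \ref{sd prop} — the formula \eqref{A*A} for $A*A$, the two inclusions for $\Ann(A)$, and then deduce \eqref{Ann(A)} as a special case. Throughout I will use the identity \eqref{*B} from Proposition \ref{semi}, together with the hypothesis that $C$ is perfect (so $C = C*C$) and that $B$ is a trivial brace with commutative operation $+$.

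First, for the derived ideal: since $A*A$ is by definition the subgroup of $(A, \cdot) = (B,+)\times (C,\cdot)$ generated by the elements $(b_1,c_1)*(b_2,c_2) = ((\phi_{c_1}-\mathrm{id}_B)(b_2), c_1*c_2)$, I would argue both inclusions in \eqref{A*A}. For ``$\subseteq$'': each generator has first coordinate in $\mathrm{Im}(\phi_{c_1}-\mathrm{id}_B)$ with $\phi_{c_1}\in\mathrm{Im}(\phi)$, hence in the displayed subgroup of $B$; and its second coordinate $c_1*c_2$ lies in $C*C = C$. So all generators, and therefore $A*A$, lie in the right-hand side. For ``$\supseteq$'': taking $b_1 = 0$ shows $((\phi_c-\mathrm{id}_B)(b),\, 1)\in A*A$ for every $c\in C$, $b\in B$ (using $c*1 = 1$), so $\langle\bigcup_\xi\mathrm{Im}(\xi-\mathrm{id}_B)\rangle\times\{1\}\subseteq A*A$; and choosing $b_2 = 0$ gives $(0, c_1*c_2)\in A*A$, so $\{0\}\times (C*C) = \{0\}\times C\subseteq A*A$. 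Since $A*A$ is a subgroup of the direct product $(B,+)\times(C,\cdot)$ containing both $H\times\{1\}$ and $\{0\}\times C$ (where $H$ denotes the relevant subgroup of $B$), it contains $H\times C$, giving the reverse inclusion. Here I should note that $\langle\bigcup_\xi\mathrm{Im}(\xi-\mathrm{id}_B)\rangle$ being $\phi$-invariant is automatic, so no extra check is needed to see the product is a subgroup.

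Next, the annihilator. For the first inclusion $\mathrm{Fix}_\phi(B)\times\{1\}\subseteq\Ann(A)$, take $b\in\mathrm{Fix}_\phi(B)$ and the element $a=(b,1)$; I must check $a\in Z(A,\cdot)$, $a\in Z(A,\circ)$, and $a*x = 1 = x*a$ for all $x\in A$. Centrality in $(A,\cdot)=(B,+)\times(C,\cdot)$ is clear since $b$ is central in the abelian group $B$ and the $C$-coordinate is trivial. For the $*$-conditions, using \eqref{*B}: $a*x = (b,1)*(b_2,c_2) = ((\phi_1-\mathrm{id}_B)(b_2),\, 1*c_2) = (0,1)$ since $\phi_1=\mathrm{id}_B$; and $x*a = (b_2,c_2)*(b,1) = ((\phi_{c_2}-\mathrm{id}_B)(b),\, c_2*1) = (0,1)$ since $b\in\mathrm{Fix}_\phi(B)$ kills every $\phi_{c_2}-\mathrm{id}_B$. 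Given these, centrality in $(A,\circ)$ follows since $a$ being killed by all $*$'s and central in $(A,\cdot)$ forces it into $Z(A,\circ)$ — or one computes $a\circ x$ and $x\circ a$ directly from the definition of $\circ$ on the semidirect product and checks equality using $\phi_{c}(b)=b$. For the second inclusion $\Ann(A)\subseteq\mathrm{Fix}_\phi(B)\times(\ker(\phi)\cap\Ann(C))$, I would take an arbitrary $a=(b,c)\in\Ann(A)$ and extract constraints: the condition $a*x=1$ for all $x$, applied to $x=(b_2,c_2)$, gives $(\phi_c-\mathrm{id}_B)(b_2)=0$ for all $b_2$ (so $\phi_c=\mathrm{id}_B$, i.e.\ $c\in\ker(\phi)$) and $c*c_2=1$ for all $c_2$ (so $c$ is killed by $*$ in $C$ on the left); the condition $x*a=1$ gives $(\phi_{c_2}-\mathrm{id}_B)(b)=0$ for all $c_2$ (so $b\in\mathrm{Fix}_\phi(B)$) and $c_2*c=1$ for all $c_2$; and $a\in Z(A,\cdot)$ forces $c\in Z(C,\cdot)$. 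Combining, $c$ lies in $\{c\in C\mid\forall x: c*x=1=x*c\}\cap Z(C,\cdot)$, which by the second displayed description in Definition \ref{def:center} is exactly $\Ann(C)$; together with $c\in\ker(\phi)$ and $b\in\mathrm{Fix}_\phi(B)$ this gives the claimed inclusion.

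Finally, \eqref{Ann(A)} is immediate: if $\Ann(C)=1$ then the right-hand side of the second inclusion is $\mathrm{Fix}_\phi(B)\times(\ker(\phi)\cap 1) = \mathrm{Fix}_\phi(B)\times\{1\}$, which equals the left-hand side of the first inclusion, so both inclusions collapse to the equality $\Ann(A)=\mathrm{Fix}_\phi(B)\times\{1\}$. The only mildly delicate point, and the one I would be most careful about, is correctly bookkeeping the ``two-sided'' $*$-conditions — that membership in $\Ann(A)$ demands \emph{both} $a*x=1$ and $x*a=1$ — and matching this against the symmetric description of $\Ann(C)$; everything else is a direct substitution into \eqref{*B} and the definition of $\circ$ on $B\rtimes_\phi C$. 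I do not anticipate a genuine obstacle, only the need to be systematic about which coordinate yields which constraint.
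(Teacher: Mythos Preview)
Your proposal is correct and follows essentially the same route as the paper's proof: both inclusions for $A*A$ are obtained by specializing the generators from \eqref{*B} (taking $c_2=1$ and $b_2=0$ respectively), and the annihilator inclusions are read off coordinatewise from the two $*$-conditions together with $Z(A,\cdot)=(B,+)\times Z(C,\cdot)$. The only cosmetic remarks are that setting $b_1=0$ is redundant (it does not appear in \eqref{*B}; the real choice is $c_2=1$), and your aside about $\phi$-invariance of $\langle\bigcup_\xi\mathrm{Im}(\xi-\mathrm{id}_B)\rangle$ is not needed merely to see $H\times C$ is a subgroup of the direct product $(B,+)\times(C,\cdot)$.
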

\begin{proof}From (\ref{*B}), it is clear that the left-to-right inclusion of (\ref{A*A}) holds.  By taking $c_2=1$ and $b_2=0$, respectively, we also see that 
\[  ((\phi_{c_1}-\mathrm{id}_B)(b_2),1),(0,c_1*c_2)\in A*A.\]
Since $b_2\in B,\, c_1,c_2\in C$ are arbitrary and $C = C*C$, it follows that
\[ \Bigg\langle\bigcup_{\xi\in\mathrm{Im}(\phi)}\mathrm{Im}(\xi-\mathrm{id}_B)\Bigg\rangle\times \{1\}, \, \{0\} \times C \subseteq A*A.\]
This shows that the right-to-left inclusion of (\ref{A*A}) also holds.

Next, consider an element $(b,c)$ from the center $Z(A,\cdot) = (B,+)\times Z(C,\cdot)$ of $(A,\cdot)$. By definition, we have $(b,c)\in \Ann(A)$ if and only if
\begin{align*}
(b,c)* (x,y) & = ((\phi_c-\mathrm{id}_B)(x),c*y) = (0,1)\\
(x,y)* (b,c) & = ((\phi_y-\mathrm{id}_B)(b),y*c) = (0,1)
\end{align*}
are satisfied for all $x\in B, y\in C$. They obviously hold when $b\in \mathrm{Fix}_{\phi}(B)$ and $c =1$. Conversely, they imply the equalities
\begin{align*}
 \forall y\in Y:&\,  c*y=y*c=1,\hspace{-1cm}&&\mbox{which yields }c\in \Ann(C),\\
 \forall x\in X:&\, (\phi_c-\mathrm{id}_B)(x) = 0,\hspace{-1cm}&&\mbox{which yields }c\in \ker(\phi),\\
 \forall y\in Y:&\, (\phi_y-\mathrm{id}_B)(b) =0,\hspace{-1cm}&&\mbox{which yields }b\in \mathrm{Fix}_{\phi}(B).
 \end{align*}
 The inclusions regarding $\Ann(A)$ then follow as well. 
 \end{proof}
  
To simplify things further, we shall specialize to the case when
\[ B = (\mathbb{F}_p^n,+,+)\quad \mbox{and}\quad \Ann(C)=1,\]
where $p$ is a prime and $n\geq 2$ is a positive integer. Below, we shall give three ways to construct the homomorphism $\phi$ so that 
\begin{equation}\label{neq}
(A*A)*\Ann_2(A)\neq 1\mbox{ for  }A = B\rtimes_\phi C.
\end{equation}
In view of (\ref{A*A}) and (\ref{Ann(A)}), the only thing that matters is $\mathrm{Im}(\phi)$,  and for each $n=2,3,4$, we shall give an example of $\mathrm{Im}(\phi)$ that would realize (\ref{neq}).



\begin{prop}\label{prop1} 
Let $C = (C,\cdot,\circ)$ be a perfect skew brace with $\Ann(C) = 1$ and let $A = \mathbb{F}_p^2 \rtimes_\phi C$, where $p$ is any prime and $\phi : (C,\circ) \longrightarrow \mathrm{GL}_2(\mathbb{F}_p)$ is a homomorphism with
\[ \mathrm{Im}(\phi)=\langle  \left[\begin{smallmatrix}
1 & 1 \\
0 & 1
\end{smallmatrix}\right]\rangle \simeq  \mathbb{Z}/p\mathbb{Z}.\]
Then the derived ideal and annihilator of $A$ are given by
\begin{align*}
   A*A =  \langle \left[\begin{smallmatrix}1\\0\end{smallmatrix}\right]\rangle \times C,\quad
      \Ann(A) = \langle \left[\begin{smallmatrix}1\\0\end{smallmatrix}\right]\rangle \times \{1\},
  \end{align*}
 and we have the non-equality $(A*A)*\Ann_2(A)\neq 1$.
\end{prop}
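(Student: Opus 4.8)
The plan is to read off $A*A$ and $\Ann(A)$ from the general formulas (\ref{A*A}) and (\ref{Ann(A)}), then to identify the second annihilator $\Ann_2(A)$ by describing the quotient skew brace $A/\Ann(A)$ explicitly, and finally to exhibit one nonzero element of $(A*A)*\Ann_2(A)$.

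Write $u = \left[\begin{smallmatrix} 1 & 1 \\ 0 & 1\end{smallmatrix}\right]$, so that $\mathrm{Im}(\phi) = \langle u\rangle$, and note $u^{k} - \mathrm{id}_B = \left[\begin{smallmatrix} 0 & k \\ 0 & 0\end{smallmatrix}\right]$ for every $k$. Hence $\mathrm{Im}(u^{k} - \mathrm{id}_B) = \langle \left[\begin{smallmatrix}1\\0\end{smallmatrix}\right]\rangle$ and $\ker(u^{k} - \mathrm{id}_B) = \langle \left[\begin{smallmatrix}1\\0\end{smallmatrix}\right]\rangle$ whenever $p\nmid k$, while both degenerate when $p\mid k$. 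Plugging this into (\ref{A*A}) gives $A*A = \langle \left[\begin{smallmatrix}1\\0\end{smallmatrix}\right]\rangle \times C$, and since $\mathrm{Fix}_{\phi}(B) = \langle \left[\begin{smallmatrix}1\\0\end{smallmatrix}\right]\rangle$, formula (\ref{Ann(A)}) --- applicable because $\Ann(C)=1$ --- gives $\Ann(A) = \langle \left[\begin{smallmatrix}1\\0\end{smallmatrix}\right]\rangle \times \{1\}$. This part is routine.

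The heart of the matter is computing $\Ann_2(A)$. Put $e_1 = \left[\begin{smallmatrix}1\\0\end{smallmatrix}\right]$ and $e_2 = \left[\begin{smallmatrix}0\\1\end{smallmatrix}\right]$. The point is that every $\phi_c = u^{k}$ satisfies $\phi_c(b)\equiv b$ modulo $\langle e_1\rangle$ for all $b$, so $\phi$ induces the trivial action on the quotient $B/\langle e_1\rangle \cong (\mathbb{F}_p,+,+)$; consequently $A/\Ann(A)$ is the genuine direct product of skew braces $(\mathbb{F}_p,+,+)\times C$. Since the annihilator of a direct product of skew braces is the product of the annihilators (the maps $*$, $[\,,\,]$, $[\,,\,]_\circ$ and the centers $Z(\cdot)$, $Z(\circ)$ all splitting as products) and $\Ann(C)=1$, we get $\Ann(A/\Ann(A)) = \mathbb{F}_p \times \{1\}$, so its preimage $\Ann_2(A)$ equals $\mathbb{F}_p^2 \times \{1\}$. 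Now pick $c\in C$ with $\phi_c = u$ (possible since $\mathrm{Im}(\phi)=\langle u\rangle$); then $(0,c)\in A*A$ and $(e_2,1)\in\Ann_2(A)$, and identity (\ref{*B}) yields $(0,c)*(e_2,1) = ((u-\mathrm{id}_B)(e_2),\, c*1) = (e_1,1)\neq 1$. Hence $(A*A)*\Ann_2(A)\neq 1$.

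I expect the main obstacle to be the identification of $\Ann_2(A)$: one has to check carefully that passing to $A/(\langle e_1\rangle\times\{1\})$ kills the twisting of the semidirect product, so that the quotient is an honest direct product whose annihilator can be computed factorwise and whose preimage $\mathbb{F}_p^2\times\{1\}$ is therefore easy to write down; after that, the non-equality is witnessed by a single application of (\ref{*B}). A minor point to verify along the way is that the two displayed characterizations of $\Ann$ in Definition \ref{def:center} both restrict compatibly along the direct-product decomposition, but this is immediate once one observes that $*$, $[\,,\,]$, $[\,,\,]_\circ$, $Z(\cdot)$, and $Z(\circ)$ are all computed componentwise in $(\mathbb{F}_p,+,+)\times C$.
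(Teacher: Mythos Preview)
Your argument is correct. The computations of $A*A$ and $\Ann(A)$ are identical to the paper's, and your final witness $(0,c)*(e_2,1)=(e_1,1)$ is exactly the element the paper exhibits.

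The one place you diverge is in how you show $(e_2,1)\in\Ann_2(A)$. The paper does not compute $\Ann_2(A)$ in full; instead it verifies directly, using (\ref{*B}), that $(e_2,1)*(x,y)=(0,1)$ and $(x,y)*(e_2,1)\in\Ann(A)$ for all $(x,y)\in A$, and since $(e_2,1)\in Z(A,\cdot)$ trivially, this places $(e_2,1)$ in $\Ann_2(A)$. You take a more structural route: you observe that $\phi$ becomes trivial modulo $\langle e_1\rangle$, so $A/\Ann(A)\cong(\mathbb{F}_p,+,+)\times C$ is an honest direct product, whence $\Ann(A/\Ann(A))=\mathbb{F}_p\times\{1\}$ and $\Ann_2(A)=\mathbb{F}_p^2\times\{1\}$. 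Your method yields strictly more information (the full second annihilator rather than a single element of it) at the cost of one extra structural observation; the paper's direct verification is shorter and avoids any appeal to how annihilators behave on direct products. Both are perfectly valid and of comparable length.
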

\begin{proof}The two equalities follow from (\ref{A*A}), (\ref{Ann(A)}), and the fact that
\[\mathrm{Im}(Q-I_2) =  \ker(Q-I_2)= \langle \left[\begin{smallmatrix}1\\0\end{smallmatrix}\right]\rangle\mbox{ }\]for all $Q \in \mathrm{Im}(\phi)$ with $Q\neq I_2$.

To show the non-equality, consider the element
\[ a = (\left[\begin{smallmatrix}0\\1\end{smallmatrix}\right],1) \in Z(A,\cdot).\]
For any $x_1,x_2\in \mathbb{F}_p$ and $y\in C$, it follows from (\ref{*B}) that
\begin{align}\notag
(\left[\begin{smallmatrix}0\\1\end{smallmatrix}\right],1) * (\left[\begin{smallmatrix}x_1\\x_2\end{smallmatrix}\right],y) & = ((\phi_1-I_2)\left[\begin{smallmatrix}x_1\\x_2\end{smallmatrix}\right], 1*y) = (\left[\begin{smallmatrix}0\\0\end{smallmatrix}\right],1)\\\label{neq1}
(\left[\begin{smallmatrix}x_1\\x_2\end{smallmatrix}\right],y) *(\left[\begin{smallmatrix}0\\1\end{smallmatrix}\right],1) & = ((\phi_y-I_2)\left[\begin{smallmatrix}0\\1\end{smallmatrix}\right], y*1) = (\left[\begin{smallmatrix}\gamma\\0\end{smallmatrix}\right],1)
\end{align}
for some $\gamma\in \mathbb{F}_p$. They both lie in $\Ann(A)$ and thus $a\in \Ann_2(A)$. Taking 
\[ z:= (\left[\begin{smallmatrix}x_1\\x_2\end{smallmatrix}\right],y) \mbox{ with }x_2=0\mbox{ and }\phi_y = \left[ \begin{smallmatrix}1 & 1\\0&1\end{smallmatrix}\right], \]
we see that $\gamma=1$ in (\ref{neq1}) and so $z*a \neq 1$. Since $a\in \Ann_2(A)$ and $z\in A*A$, this yields $(A*A)*\Ann_2(A)\neq 1$, as desired.
\end{proof}

For $n=3$, we need to take $p$ to be odd, but the situation is very similar.

\begin{prop}\label{prop2} 
Let $C = (C,\cdot,\circ)$ be a perfect skew brace with $\Ann(C) = 1$ and let $A = \mathbb{F}_p^3 \rtimes_\phi C$, where $p$ is an odd prime and $\phi : (C,\circ) \longrightarrow \mathrm{GL}_3(\mathbb{F}_p)$ is a homomorphism with
\[ \mathrm{Im}(\phi)=\left\langle  \left[\begin{smallmatrix}
1 & 0 & 1 \\
0 & 1 & 0\\
0 &0 &1
\end{smallmatrix}\right]\right\rangle \times
\left\langle\left[
\begin{smallmatrix}
1 & 1 & 0\\
0 & -1 &0 \\
0 & 0 & 1
\end{smallmatrix}
\right]\right\rangle
\simeq \mathbb{Z}/p\mathbb{Z}\times \mathbb{Z}/2\mathbb{Z}.\]
Then the derived ideal and annihilator of $A$ are given by
\begin{align*}
  A*A =  \left\langle \left[\begin{smallmatrix}1\\0\\0\end{smallmatrix}\right],\left[\begin{smallmatrix}0\\1\\0\end{smallmatrix}\right]\right\rangle \times C,\quad
   \Ann(A) = \left\langle \left[\begin{smallmatrix}1\\0\\0\end{smallmatrix}\right]\right\rangle \times \{1\},
\end{align*}
and we have the non-equality $(A*A)*\Ann_2(A)\neq 1$. 
\end{prop}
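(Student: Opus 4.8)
\textbf{Proof proposal for Proposition~\ref{prop2}.}

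The plan is to run the argument of Proposition~\ref{prop1} with one more coordinate; the only genuinely new work is a somewhat longer piece of linear algebra. Write $e_1,e_2,e_3$ for the standard basis of $\mathbb{F}_p^3$ and let $P=\left[\begin{smallmatrix}1&0&1\\0&1&0\\0&0&1\end{smallmatrix}\right]$, $R=\left[\begin{smallmatrix}1&1&0\\0&-1&0\\0&0&1\end{smallmatrix}\right]$ be the two displayed generators of $\mathrm{Im}(\phi)$, so that (by the stated product structure of $\mathrm{Im}(\phi)$) every element of $\mathrm{Im}(\phi)$ is $P^{k}R^{\varepsilon}$ with $0\le k<p$ and $\varepsilon\in\{0,1\}$. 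First I would record, for each nonidentity $\xi\in\mathrm{Im}(\phi)$, the subspaces $\mathrm{Im}(\xi-I_3)$ and $\ker(\xi-I_3)$: a direct computation gives $\mathrm{Im}(P^{k}-I_3)=\langle e_1\rangle$, $\ker(P^{k}-I_3)=\langle e_1,e_2\rangle$ for $k\neq 0$; $\mathrm{Im}(R-I_3)=\langle e_1-2e_2\rangle$, $\ker(R-I_3)=\langle e_1,e_3\rangle$ (here $p$ odd is used to invert $2$); and $\mathrm{Im}(P^{k}R-I_3)=\langle e_1,e_2\rangle$, $\ker(P^{k}R-I_3)=\langle e_1\rangle$ for $k\neq 0$. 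Since the largest image, namely $\mathrm{Im}(PR-I_3)=\langle e_1,e_2\rangle$, already contains all the others, formula (\ref{A*A}) gives $A*A=\langle e_1,e_2\rangle\times C$. As for the annihilator, $e_1$ is fixed by both $P$ and $R$, so $\langle e_1\rangle\subseteq\mathrm{Fix}_{\phi}(B)$, while $\mathrm{Fix}_{\phi}(B)\subseteq\ker(P-I_3)\cap\ker(R-I_3)=\langle e_1,e_2\rangle\cap\langle e_1,e_3\rangle=\langle e_1\rangle$; hence $\mathrm{Fix}_{\phi}(B)=\langle e_1\rangle$, and (\ref{Ann(A)}) together with the hypothesis $\Ann(C)=1$ gives $\Ann(A)=\langle e_1\rangle\times\{1\}$.

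For the non-equality I would imitate the last part of Proposition~\ref{prop1}, but with the element $a=(e_3,1)$ in place of $(e_2,1)$; note $a\in Z(A,\cdot)=B\times Z(C,\cdot)$. Exactly as there, to conclude $a\in\Ann_2(A)$ it suffices to check that $a*(x,y)$ and $(x,y)*a$ lie in $\Ann(A)$ for all $(x,y)\in A$. By (\ref{*B}) the first is $((\phi_1-I_3)(x),\,1*y)=(0,1)$. For the second, $R$ fixes $e_3$ while $P^{k}e_3=e_3+ke_1$, so $(\phi_y-I_3)(e_3)\in\langle e_1\rangle$ for every $y\in C$, and hence $(x,y)*a=((\phi_y-I_3)(e_3),\,y*1)\in\langle e_1\rangle\times\{1\}=\Ann(A)$. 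Thus $a\in\Ann_2(A)$. Finally I would choose $w\in C$ with $\phi_w=P$ (possible since $P\in\mathrm{Im}(\phi)$) and set $z=(0,w)$; then $z\in\langle e_1,e_2\rangle\times C=A*A$, and by (\ref{*B}) we get $z*a=((P-I_3)(e_3),\,w*1)=(e_1,1)\neq(0,1)$, so $(A*A)*\Ann_2(A)\neq 1$, as desired.

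I do not expect a real obstacle here: once the subspaces above are in hand, the argument is forced, just as for Proposition~\ref{prop1}. The one point that requires a moment's care is the choice of $a$. Because $p$ is odd, $R$ genuinely entangles $e_1$ and $e_2$, so $(R-I_3)(e_2)=e_1-2e_2\notin\langle e_1\rangle=\Ann(A)\cap B$, and the naive candidate $(e_2,1)$ is \emph{not} in $\Ann_2(A)$; the vector $e_3$ — fixed by $R$ and pushed into $\langle e_1\rangle$ by $P$ — is the correct replacement. (This is also essentially the reason the odd-$p$ hypothesis is imposed: for $p=2$ one instead has $\mathrm{Im}(R-I_3)=\langle e_1\rangle$, so $A*A=\langle e_1\rangle\times C$ and the example degenerates to the $n=2$ one of Proposition~\ref{prop1}.)
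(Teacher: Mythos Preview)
Your proof is correct and follows essentially the same approach as the paper: both compute $A*A$ and $\Ann(A)$ via (\ref{A*A}) and (\ref{Ann(A)}) using the kernels and images of $\xi-I_3$ for $\xi\in\mathrm{Im}(\phi)$, then verify that $a=(e_3,1)\in\Ann_2(A)$ and exhibit $z\in A*A$ with $\phi$-component equal to $P$ so that $z*a=(e_1,1)\neq 1$. The only differences are cosmetic (you parametrize $\mathrm{Im}(\phi)$ as $P^kR^{\varepsilon}$ rather than listing the matrices, and you add a helpful remark on why $e_3$ rather than $e_2$ is the correct choice of witness).
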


\begin{proof}  The claim for $\Ann(A)$ holds by (\ref{Ann(A)}) and the fact that
\begin{align*}
\ker\left( \left[\begin{smallmatrix}
1 & 0 & 1 \\
0 & 1 & 0\\
0 &0 &1
\end{smallmatrix}\right]-I_3\right) & = \left\langle\left[\begin{smallmatrix}1\\0\\0\end{smallmatrix}\right] ,\left[\begin{smallmatrix}0\\1\\0\end{smallmatrix}\right]\right\rangle,\\
\ker\left( \left[\begin{smallmatrix}
1 & 1 & 0 \\
0 & -1 & 0\\
0 &0 &1
\end{smallmatrix}\right]-I_3\right) & = \left\langle\left[\begin{smallmatrix}1\\0\\0\end{smallmatrix}\right],\left[\begin{smallmatrix}0\\0\\1\end{smallmatrix}\right] \right\rangle.
\end{align*}
Here, once we regard elements of $\mathrm{Fix}_{\phi}(\mathbb{F}_p^3)$ as the fixed points of $\mathrm{Im}(\phi)$, it is clear that it suffices to consider the generators of $\mathrm{Im}(\phi)$. Now, observe that
\begin{equation}\label{elements}
\mathrm{Im}(\phi) = \left\{\left[\begin{smallmatrix}
1 & 0 & \gamma \\
0 & 1 & 0\\
0 &0 &1
\end{smallmatrix}\right],\, 
\left[\begin{smallmatrix}
1 & 1 & \gamma\\
0 & -1 & 0\\
0 & 0 & 1
\end{smallmatrix}\right]:\gamma\in\mathbb{F}_p\right\}.\end{equation}
The claim for $A*A$ then follows from (\ref{A*A}) and the fact that
\begin{align*}
\mathrm{Im}\left( \left[\begin{smallmatrix}
1 & 0 & \gamma\\
0 & 1 & 0\\
0 &0 &1
\end{smallmatrix}\right]-I_3\right) & =\left\langle\left[\begin{smallmatrix}\gamma\\0\\0\end{smallmatrix}\right]\right\rangle,\\
\mathrm{Im}\left( \left[\begin{smallmatrix}
1 & 1 & \gamma\\
0 & -1 & 0\\
0 &0 &1
\end{smallmatrix}\right]-I_3\right) & =\left\langle\left[\begin{smallmatrix}1\\-2\\0\end{smallmatrix}\right],\left[\begin{smallmatrix}\gamma\\0\\0\end{smallmatrix}\right]\right\rangle,
\end{align*}
where $2\neq 0$ because $p$ is assumed to be odd. 

To show the non-equality, consider the element
\[a:=\left(\left[\begin{smallmatrix}0\\0\\1\end{smallmatrix}\right],1\right)\in Z(A,\cdot).\]
Observe that (\ref{elements}) implies
\[ \forall y \in C:\, (\phi_y - I_3)\left[\begin{smallmatrix}0\\0\\1\end{smallmatrix}\right] \in \left\langle \left[\begin{smallmatrix}1\\0\\0\end{smallmatrix}\right]\right\rangle.\]
For any $x_1,x_2,x_3\in \mathbb{F}_p$ and $y\in C$, we then see from (\ref{*B}) that
\begin{align}\notag
\left(\left[\begin{smallmatrix}0\\0\\1\end{smallmatrix}\right],1\right) * \left(\left[\begin{smallmatrix}x_1\\x_2\\x_3\end{smallmatrix}\right],y\right) & = \left((\phi_1-I_3) \left[\begin{smallmatrix}x_1\\x_2\\x_3\end{smallmatrix}\right], 1*y\right) = \left(\left[\begin{smallmatrix}0\\0\\0\end{smallmatrix}\right],1\right)\\\label{neq1'}
\left(\left[\begin{smallmatrix}x_1\\x_2\\x_3\end{smallmatrix}\right],y\right) *\left(\left[\begin{smallmatrix}0\\0\\1\end{smallmatrix}\right],1\right) & = \left((\phi_y-I_3)\left[\begin{smallmatrix}0\\0\\1\end{smallmatrix}\right], y*1\right) = \left(\left[\begin{smallmatrix}\gamma \\0\\0\end{smallmatrix}\right],1\right)
\end{align}
for some $\gamma\in\mathbb{F}_p$. They both lie in $\Ann(A)$ and thus $a\in \Ann_2(A)$. Taking 
\[ z:=\left(\left[\begin{smallmatrix}x_1\\x_2\\x_3\end{smallmatrix}\right],y\right)\mbox{ with }x_3=0\mbox{ and }\phi_y = \left[\begin{smallmatrix} 1 & 0 & 1 \\ 0 & 1 & 0\\ 0 & 0 & 1 \end{smallmatrix}\right],\]
we see that $\gamma=1$ in (\ref{neq1'}) and so $z*a \neq 1$. Since $a\in \Ann_2(A)$ and $z\in A*A$, this yields $(A*A)*\Ann_2(A)\neq 1$, as desired.\end{proof}

Note that the skew braces constructed in Propositions \ref{prop1} and \ref{prop2} are not perfect. To obtain a perfect skew brace without losing the desired condition (\ref{neq}), we shall use $B =(\mathbb{F}_2^4,+,+)$. The suitable subgroup $\mathrm{Im}(\phi)$ of $\mathrm{GL}_4(\mathbb{F}_2)$ that we found is isomorphic to the symmetric group $S_4$, and we acknowledge the use of \textsc{magma} \cite{Magma} in the search of this subgroup.

\begin{prop}\label{prop3}Let $C = (C,\cdot,\circ)$ be a perfect skew brace with $\Ann(C) = 1$ and let $A = \mathbb{F}_2^4 \rtimes_\phi C$, where $\phi : (C,\circ) \longrightarrow \mathrm{GL}_4(\mathbb{F}_2)$ is a homomorphism with
\begin{align*}
 \mathrm{Im}(\phi)
 & =
 \left\langle  \left[\begin{smallmatrix}
0 & 1 &1 & 1\\
0 & 1 & 0 & 0\\
1 & 0 & 0 & 1\\
0 &1 & 0 & 1
 \end{smallmatrix}\right],
\left[\begin{smallmatrix}
1 & 1 & 0 & 0\\
0 & 1& 0 & 0\\
1 & 1 & 0 & 1\\
1 & 0 & 1 &0
 \end{smallmatrix}\right]\right\rangle
 \rtimes
 \left\langle  \left[\begin{smallmatrix}
1 & 0 & 0 & 0\\
1 & 1 & 1 & 1\\
0 & 0 & 1 & 0 \\
 1 & 1 & 1 & 0
 \end{smallmatrix}\right],\left[
\begin{smallmatrix}
0 & 0 & 1 & 0\\
0 & 1 & 0 & 0 \\
1 & 0 & 0 & 0\\
0 & 1 & 0 & 1
\end{smallmatrix}
\right]\right\rangle
\simeq S_4.\end{align*}
Then the derived ideal and annihilator of $A$ are given by
\[ A*A = A,\quad \Ann(A) = \left\langle \left[\begin{smallmatrix}1\\0\\1\\0\end{smallmatrix}\right]\right\rangle \times \{1\},\]
and we have the non-equality $A*\Ann_2(A)\neq1$.\end{prop}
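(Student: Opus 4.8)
The plan is to follow the template established in Propositions \ref{prop1} and \ref{prop2}, but now with the harder computational input that $\mathrm{Im}(\phi)\simeq S_4$ acts on $\mathbb{F}_2^4$. First I would apply Proposition \ref{sd prop}: since $\Ann(C)=1$, equation (\ref{Ann(A)}) gives $\Ann(A) = \mathrm{Fix}_\phi(\mathbb{F}_2^4)\times\{1\}$, so I must verify that the common fixed-point subspace of the four listed generators is exactly the line spanned by $\left[\begin{smallmatrix}1\\0\\1\\0\end{smallmatrix}\right]$; this is a finite linear-algebra check (kernels of $M-I_4$ for the generators, then intersect). For the derived ideal, equation (\ref{A*A}) says $A*A = \langle\bigcup_{\xi\in\mathrm{Im}(\phi)}\mathrm{Im}(\xi-\mathrm{id}_B)\rangle\times C$, and since $C$ is perfect the second factor is all of $C$; so I need the span of all the images $\mathrm{Im}(\xi-I_4)$, $\xi\in S_4$, to be the whole of $\mathbb{F}_2^4$. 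It suffices to exhibit enough group elements $\xi$ whose $\mathrm{Im}(\xi-I_4)$ together span $\mathbb{F}_2^4$ — e.g. checking that among the four generators (or short words in them) one already gets rank $4$ — which then forces $A*A = A$, i.e. $A$ is perfect.

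Next, to produce the non-equality $A*\Ann_2(A)\neq 1$, I would mimic the earlier proofs: pick a specific element $a = (v,1)$ with $v\in\mathbb{F}_2^4$, $v\notin\mathrm{Fix}_\phi(\mathbb{F}_2^4)$, and show $a\in\Ann_2(A)$ by checking that for all $x\in\mathbb{F}_2^4$ and $y\in C$ both $a*(x,y)$ and $(x,y)*a$ land in $\Ann(A) = \langle\left[\begin{smallmatrix}1\\0\\1\\0\end{smallmatrix}\right]\rangle\times\{1\}$. Using (\ref{*B}), $a*(x,y) = ((\phi_1-I_4)x,\,1*y) = (0,1)$ automatically, and $(x,y)*a = ((\phi_y-I_4)v,\,y*1) = ((\phi_y-I_4)v,1)$; so the requirement is precisely $(\xi-I_4)v\in\langle\left[\begin{smallmatrix}1\\0\\1\\0\end{smallmatrix}\right]\rangle$ for every $\xi\in\mathrm{Im}(\phi)$. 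Because $\mathrm{Im}(\phi)\simeq S_4$ is generated by the four listed matrices, it suffices to verify this membership for the generators (noting $\langle\left[\begin{smallmatrix}1\\0\\1\\0\end{smallmatrix}\right]\rangle$ is itself $\phi$-fixed, so the condition is stable under composition). Then to get $a\notin\Ann(A)$ — equivalently $A*\Ann_2(A)\neq1$ — I exhibit one $\xi\in\mathrm{Im}(\phi)$ with $(\xi-I_4)v\neq 0$; taking $z=(x,y)$ with $\phi_y=\xi$ gives $z*a\neq1$ with $z\in A*A=A$ and $a\in\Ann_2(A)$.

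The genuine obstacle is purely computational but non-trivial: I must pin down the right vector $v$. It has to satisfy two competing constraints — $(\xi-I_4)v$ must stay inside the one-dimensional space $\langle\left[\begin{smallmatrix}1\\0\\1\\0\end{smallmatrix}\right]\rangle$ for all $\xi\in S_4$ (a strong linear condition, essentially saying the image of $v$ in $\mathbb{F}_2^4/\langle\left[\begin{smallmatrix}1\\0\\1\\0\end{smallmatrix}\right]\rangle$ is $S_4$-fixed), yet $v$ must not be globally fixed (so the induced action on the two-dimensional span $\langle v,\left[\begin{smallmatrix}1\\0\\1\\0\end{smallmatrix}\right]\rangle$ is a nontrivial extension by the trivial module). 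Concretely this means identifying a two-dimensional $\mathrm{Im}(\phi)$-invariant subspace $W$ with $\langle\left[\begin{smallmatrix}1\\0\\1\\0\end{smallmatrix}\right]\rangle\subsetneq W$ on which $S_4$ acts through its sign quotient; the validity of the whole construction hinges on the specific $\mathrm{Im}(\phi)$ having been chosen (with \textsc{magma}'s help) so that such a $W$ exists. Once $v$ is fixed — presumably $v = \left[\begin{smallmatrix}0\\1\\0\\0\end{smallmatrix}\right]$ or another explicit vector — the remaining verifications are all direct $4\times4$ matrix arithmetic over $\mathbb{F}_2$ on the four generators, following verbatim the pattern of Proposition \ref{prop2}.
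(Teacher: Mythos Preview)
Your plan matches the paper's proof essentially step for step: compute $\Ann(A)$ via the intersection of the kernels $\ker(M_i-I_4)$, verify $A*A=A$ by exhibiting four independent columns among the $M_i-I_4$, and then produce $a=(v,1)\in\Ann_2(A)\setminus\Ann(A)$ by checking $(\xi-I_4)v\in U=\langle\left[\begin{smallmatrix}1\\0\\1\\0\end{smallmatrix}\right]\rangle$ on generators (using that $U$ is $\phi$-stable). The only correction is your tentative guess: $v=\left[\begin{smallmatrix}0\\1\\0\\0\end{smallmatrix}\right]$ fails already at the first generator, whereas the paper takes $v=\left[\begin{smallmatrix}1\\0\\0\\1\end{smallmatrix}\right]$, which is fixed by $M_1,M_2,M_3$ and sent to $v+\left[\begin{smallmatrix}1\\0\\1\\0\end{smallmatrix}\right]$ by $M_4$ --- exactly confirming your ``sign-quotient'' picture of the invariant $2$-plane $W$.
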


\begin{proof} The claim for $\Ann(A)$ holds by (\ref{Ann(A)}) and the fact that 
\begin{align*}\label{kernel}
\ker\left( \left[\begin{smallmatrix}
0 & 1 &1 & 1\\
0 & 1 & 0 & 0\\
1 & 0 & 0 & 1\\
0 &1 & 0 & 1
\end{smallmatrix}\right]-I_4\right) & = \left\langle\left[\begin{smallmatrix}1\\0\\1\\0\end{smallmatrix}\right] ,\left[\begin{smallmatrix}1\\0\\0\\1\end{smallmatrix}\right]\right\rangle,\\\notag
\ker\left( \left[\begin{smallmatrix}
1 & 1 & 0 & 0\\
0 & 1& 0 & 0\\
1 & 1 & 0 & 1\\
1 & 0 & 1 &0
\end{smallmatrix}\right]-I_4\right) & = \left\langle\left[\begin{smallmatrix}1\\0\\1\\0\end{smallmatrix}\right] ,\left[\begin{smallmatrix}1\\0\\0\\1\end{smallmatrix}\right]\right\rangle,\\\notag
\ker\left( \left[\begin{smallmatrix}
1 & 0 & 0 & 0\\
1 & 1 & 1 & 1\\
0 & 0 & 1 & 0 \\
 1 & 1 & 1 & 0
\end{smallmatrix}\right]-I_4\right) & = \left\langle\left[\begin{smallmatrix}1\\0\\1\\0\end{smallmatrix}\right] ,\left[\begin{smallmatrix}1\\0\\0\\1\end{smallmatrix}\right]\right\rangle,\\\notag
\ker\left( \left[\begin{smallmatrix}
0 & 0 & 1 & 0\\
0 & 1 & 0 & 0 \\
1 & 0 & 0 & 0\\
0 & 1 & 0 & 1
\end{smallmatrix}\right]-I_4\right) & = \left\langle\left[\begin{smallmatrix}1\\0\\1\\0\end{smallmatrix}\right] ,\left[\begin{smallmatrix}0\\0\\0\\1\end{smallmatrix}\right]\right\rangle.
\end{align*}
Here, again we only need to look at the generators of $\mathrm{Im}(\phi)$ because $\mathrm{Fix}_\phi(\mathbb{F}_2^4)$ is simply the set of  fixed points of $\mathrm{Im}(\phi)$. The claim for $A*A =A$ holds by (\ref{A*A}) because we have
\begin{align*}
\mathrm{Im}\left( \left[\begin{smallmatrix}
1 & 1 & 0 & 0\\
0 & 1& 0 & 0\\
1 & 1 & 0 & 1\\
1 & 0 & 1 &0
\end{smallmatrix}\right]-I_4\right) & \ni \left[\begin{smallmatrix}1\\0\\1\\0\end{smallmatrix}\right],\left[\begin{smallmatrix}0\\0\\1\\1\end{smallmatrix}\right] ,\\
\mathrm{Im}\left( \left[\begin{smallmatrix}
1 & 0 & 0 & 0\\
1 & 1 & 1 & 1\\
0 & 0 & 1 & 0 \\
 1 & 1 & 1 & 0
\end{smallmatrix}\right]-I_4\right) & \ni \left[\begin{smallmatrix}0\\1\\0\\1\end{smallmatrix}\right],\left[\begin{smallmatrix}0\\0\\0\\1\end{smallmatrix}\right] ,
\end{align*}
and the four vectors on the right are linearly independent.

To show that non-equality, consider the element
\[ a:=  \left(\left[\begin{smallmatrix}1\\0\\0\\1\end{smallmatrix}\right],1\right) \in Z(A,\cdot).\]
Let $U$ denote the subspace of $\mathbb{F}_2^4$ generated by $(1,0,1,0)$. Observe that
\begin{align*}
 \left(\left[\begin{smallmatrix}
0 & 0 & 1 & 0 \\
0 & 1 & 0 & 0\\
1 & 0 & 0 & 0\\
0 & 1 & 0 & 1
\end{smallmatrix}\right]-I_4\right)
\left[\begin{smallmatrix} 1 \\ 0 \\ 0 \\ 1\end{smallmatrix}\right] & = 
\left[\begin{smallmatrix} 1 \\ 0 \\ 1 \\ 0\end{smallmatrix}\right],\quad\mbox{and}\quad(Q - I_4)\left[\begin{smallmatrix} 1 \\ 0 \\ 0 \\ 1\end{smallmatrix}\right] =\left[\begin{smallmatrix}0 \\ 0 \\ 0 \\ 0\end{smallmatrix}\right] 
\end{align*}
for the other three generators $Q$ of $\mathrm{Im}(\phi)$. Since $U$ is fixed by all four of the generators, the above implies that
\[ \forall y\in C: \phi_y\left(\left[\begin{smallmatrix} 1 \\ 0 \\ 0 \\ 1\end{smallmatrix}\right]\right)\equiv \left[\begin{smallmatrix} 1 \\ 0 \\ 0 \\ 1\end{smallmatrix}\right]\hspace{-5mm}\pmod{U},\mbox{ that is }
(\phi_y-I_4)\left[\begin{smallmatrix} 1 \\ 0 \\ 0 \\ 1\end{smallmatrix}\right]\in U.\]
For any $x_1,x_2,x_3,x_4\in \mathbb{F}_2$ and $y\in C$, we then see from (\ref{*B}) that
 \begin{align}\notag
\left(\left[\begin{smallmatrix}1\\0\\0\\1\end{smallmatrix}\right],1\right) * \left(\left[\begin{smallmatrix}x_1\\x_2\\x_3\\x_4\end{smallmatrix}\right],y\right) & = \left((\phi_1-I_4) \left[\begin{smallmatrix}x_1\\x_2\\x_3\\x_4\end{smallmatrix}\right], 1*y\right) = \left(\left[\begin{smallmatrix}0\\0\\0\\0\end{smallmatrix}\right],1\right)\\\label{*}
\left(\left[\begin{smallmatrix}x_1\\x_2\\x_3\\x_4\end{smallmatrix}\right],y\right) *\left(\left[\begin{smallmatrix}1\\0\\0\\1\end{smallmatrix}\right],1\right) & = \left((\phi_y-I_4)\left[\begin{smallmatrix}1\\0\\0\\1\end{smallmatrix}\right], y*1\right) = \left(\left[\begin{smallmatrix}\gamma\\0\\\gamma\\0\end{smallmatrix}\right],1\right)
\end{align}
for some $\gamma \in \mathbb{F}_2$. They both lie in $\Ann(A)$ and thus $a\in \Ann_2(A)$. Since we can choose $y\in C$ such that $\gamma=1$ in (\ref{*}), it follows that $z*a\neq 1$ for some $z\in A$. Since $a\in \Ann_2(A)$, we get $A*\Ann_2(A)\neq 1$, as desired.
\end{proof}

As one can see from the proof of Proposition \ref{prop3}, the main idea is to take
\[ \mathrm{Im}(\phi)=\langle M_1,\dots,M_d\rangle,\]
where we choose the matrices $M_1,\dots,M_d\in\mathrm{GL}_n(\mathbb{F}_p)$ to be  such that
\begin{enumerate}[(1)]
\item the columns of $M_1-I_n,\dots,M_d-I_n$ generate $\mathbb{F}_p^n$;
\item there exists $\vec{v}\in \mathbb{F}_p^n$ lying outside of
\[ U:= \bigcap_{i=1}^{d} \ker(M_i-I_n) \]
but gets mapped into $U$ under every $M_1-I_n,\dots,M_d-I_n$.
\end{enumerate}
These conditions, respectively, ensure that for the skew brace $A=\mathbb{F}_p^n\rtimes_\phi C$ (with $C$ perfect and $\Ann(C)=1$) constructed, we have:
\begin{enumerate}[(1)]
\item $A$ is perfect (recall (\ref{A*A}));
\item $(\vec{v},1)\in \Ann_2(A)\setminus \Ann(A)$ (recall (\ref{Ann(A)}) and (\ref{*B})).\end{enumerate}
We can produce more candidates for $\mathrm{Im}(\phi)$ other than the one in Proposition \ref{prop3}. However, there might not exist a perfect skew brace $C$ with $\Ann(C)=1$ such that $(C,\circ)$ has the candidates as quotients. We have taken $\mathrm{Im}(\phi)\simeq S_4$ because thanks to the \texttt{YangBaxter} package in \texttt{GAP} \cite{GAP}, we have the following examples. Here $\texttt{SmallSkewbrace(n,k)}$ denotes the skew brace whose ID is $(n,k)$ in the database of \texttt{GAP}, and similarly for $\texttt{SmallGroup(n,k)}$.

\begin{example}\label{example1} According to \texttt{GAP}, the skew brace
\[ C = (C,\cdot,\circ)= \texttt{SmallSkewbrace(24,853)}\]
is perfect with $(C,\cdot)\simeq \mathbb{F}_3\times \mathbb{F}_2^3$ and $(C,\circ)\simeq S_4$. We have $\mathrm{Ann}(C)=1$ since $(C,\circ)$ has trivial center. This skew brace $C$ is among the simple braces that were constructed in \cite[Sections 6 and 7]{Bac}. In our special case, one finds that the lambda map of $C$ is given by
\[ \lambda_{(v,x_1,x_2,x_3)} = \left(
(-1)^{x_3-x_1x_2} ,\begin{bmatrix}
0 & 1 & 0\\
1 & 1 & 0\\
0 & 1 & 1
\end{bmatrix}^v
\left[
\begin{array}{cc}
M(x_1,x_2,x_3) & \begin{array}{c}0\\0\end{array}\\
\epsilon(v,x_1,x_2,x_3) & 1
\end{array}
\right]
\right),\]
where we define
\begin{align*}
M(x_1,x_2,x_3) & = \begin{bmatrix}
0 & 1 \\ 1 & 0
\end{bmatrix}^{x_3-x_1x_2}\\
\epsilon(v,x_1,x_2,x_3) & = \left[ x_1 \,\  x_2\right]
\begin{bmatrix}
0 & 1\\ 1 & 1 \end{bmatrix}^v
\begin{bmatrix}0 & 1 \\ 1 & 0 \end{bmatrix}^{1 + x_3-x_1x_2},
\end{align*}
for all $v\in\mathbb{F}_3$ and $x_1,x_2,x_3\in \mathbb{F}_2$. In the notation of \cite[Theorem 6.3]{Bac}, we are taking $Q(x_1,x_2)= x_1x_2,\, \gamma = -1,\, C = \left[ \begin{smallmatrix}
0 & 1\\ 1 & 1
\end{smallmatrix}\right],\, F = \left[\begin{smallmatrix}
0 & 1 \\ 1 & 0 
\end{smallmatrix}\right]$, and $z = \left[\begin{smallmatrix}0 & 1 \end{smallmatrix}\right]$. The function $q(x_1,x_2,x_3) = x_3-Q(x_1,x_2)$ is determined by $Q$, while the matrix $B$ represents the bilinear form $(\vec{x},\vec{y})\mapsto Q(\vec{x}+\vec{y}) - Q(\vec{x}) - Q(\vec{y})$ associated to $Q$, which is given by $\left[\begin{smallmatrix} 0 & 1\\ 1 & 0 \end{smallmatrix}\right]$ in this case. Finally, we note that the $c(\vec{x})$ there denotes the linear transformation on $\mathbb{F}_2^2$ induced by $C$.
\end{example}

\begin{example}\label{example2} According to \texttt{GAP}, the skew brace
\[ C = (C,\cdot,\circ)= \texttt{SmallSkewbrace(72,1483)}\]
is perfect with $(C,\cdot)\simeq C_9\times\mathbb{F}_2^3$ and $(C,\circ)\simeq \texttt{SmallGroup}(72,15)$, which is the unique non-split extension of $C_3$ and $S_4$. We have $\mathrm{Ann}(C)=1$ since $(C,\circ)$ has trivial center. This skew brace $C$ is a socle extension of that in Example \ref{example1}, in the sense that $C/\mathrm{Soc}(C)$ is isomorphic to \texttt{SmallSkewbrace(24,853)}.
\end{example}

Therefore, for the skew braces $C = (C,\cdot,\circ)$ in Examples \ref{example1} and \ref{example2}, there exists a homomorphism $\phi:(C,\circ)\longrightarrow \mathrm{GL}_4(\mathbb{F}_2)$ such that $\mathrm{Im}(\phi)\simeq S_4$ is given as in Proposition \ref{prop3}. The skew brace $A = \mathbb{F}_2^4\rtimes_\phi C$  is then perfect but
\[ \Ann(A/\Ann(A)) \neq 1.\]
This gives counterexamples to the analog of Gr\"{u}n's lemma. We remark that the $A$ here is in fact a brace because $(C,\cdot)$ is abelian for both examples.
 
\begin{remark} In the setting of Proposition \ref{sd prop}, similar to (\ref{Ann(A)}) we also have
\[ \Ann(A) = \mathrm{Fix}_\phi(B) \times \{1\}\mbox{ when }\ker(\phi)=1.\]
One can try to produce examples with $(A*A)* \Ann_2(A)\neq 1$ by taking $\phi$ to be injective rather than requiring $\Ann(C)=1$.
\end{remark}


\section*{Acknowledgments} 

This research is supported by JSPS KAKENHI Grant Number 24K16891.

The author would also like to thank the referee for helpful comments.

\end{document}